\newtheorem{thm}{Theorem}[section]
\newtheorem*{thm*}{Theorem}
\newtheorem{lemma}[thm]{Lemma}
\newtheorem{prop}[thm]{Proposition}
\newtheorem{corollary}[thm]{Corollary}
\newtheorem{fact}[thm]{Fact}
\theoremstyle{definition}
\newtheorem{dfn}[thm]{Definition}
\newtheorem*{dfn*}{Definition}
\newtheorem*{ques*}{Questions}
\newtheorem{ex}[thm]{Example}
\newtheorem{remark}[thm]{Remark}
\newcommand{\radius}{\mathrm{r}}
\newcommand{\Id}{\mathrm{Id}}
\newcommand{\N}{\mathbb{N}}
\newcommand{\R}{\mathbb{R}}
\newcommand{\Q}{\mathbb{Q}}
\newcommand{\C}{\mathbb{C}}
\newcommand{\Z}{\mathbb{Z}}
\newcommand{\lin}{\mathrm{lin}}
\newcommand{\dist}{\mathrm{dist}}
\newcommand{\rg}{\mathrm{rg}}
\newcommand{\ol}[1]{\overline{#1}}
\renewcommand{\Re}{\operatorname{Re}}
\newcommand{\codim}{\operatorname{codim}}
\title{Embeddability of real and positive operators}
\author{Tanja Eisner}
\address{Institute of Mathematics, University of Leipzig,
P.O. Box 100 920, 04009 Leipzig, Germany}
\email{eisner@math.uni-leipzig.de}
\author{Agnes Radl}
\address{Institute of Mathematics, Technical University of Berlin,
Strasse des 17. Juni 136,
10623 Berlin, Germany}
\email{radl@math.tu-berlin.de}
\subjclass[2010]{15B48, 47B65, 47B37, 47B99, 47D06}
\keywords{Embedding problem, real and positive operators, finite and infinite matrices, strongly continuous semigroups}
\begin{document}

\maketitle

\begin{center}
\emph{\small Dedicated to Rainer Nagel on the occasion of his 80$^{\text{th}}$ birthday}
\end{center}

\begin{abstract}
Embedding discrete Markov chains into continuous ones is a famous open problem in probability theory with many applications. 
Inspired by recent progress, we study the closely related questions of embeddability of real and positive operators into real or positive $C_0$-semigroups, respectively, on finite and infinite-dimensional separable sequence spaces.  For the real case we give both sufficient and necessary conditions for embeddability. For positive operators we present necessary conditions for positive embeddability including a full  description for the $2\times 2$-case. Moreover, we show that real embeddability is topologically typical for real contractions on $\ell^2$. 
\end{abstract}

\section{Introduction}
The problem of embedding a discrete object into a continuous one with the same properties appears in many forms. A famous 
problem in this context is finding a Markov semigroup $(T(t))_{t\geq 0}$ for a given Markov matrix $T$ such that $T(1)=T$, see, 
e.g., \cite{elfving,kingman,davies,BS} 
as well as \cite{veerman-kummel} for a connection to graph theory.
 This question has a wide range of applications, e.~g.~in sociology, biology and finance, see \cite{socio, bio, finance}. 
For the ergodic theoretic setting of 
embedding a measure-preserving transformation into a measure-preserving flow
see \cite{king,rue-lazaro,step-erem}. An analogous question in stochastics and measure theory was considered in \cite[Chapter III]{heyer} and \cite{fischer}.  The  operator theoretic setting of embedding an operator into a $C_0$-semigroup was discussed in \cite{haase-book,haase,eis}. Moreover, see \cite{physics, quantum}  for an analogous question in quantum information theory.

In this paper we study 
embeddability in the context of real and positive bounded linear operators. Here, \emph{real} operators are operators on  complexifications of real Banach spaces which map real vectors to real vectors. We call a  strongly continuous  semigroup  (or $C_0$-semigroup for short) $(T(t))_{t\geq 0}$ or a group $(T(t))_{t\in\R}$ \emph{real} if every operator $T(t)$ is real. Analogously, a \emph{(strictly) positive} operator on a Banach lattice is an operator which maps positive (nonzero) vectors into (strictly) positive ones. 
A strongly continuous semigroup $(T(t))_{t\geq 0}$ or a group $(T(t))_{t\in\R}$ is called \emph{(strictly) positive} if $T(t)$ is (strictly) positive for each $t\neq 0$. For the general theory of $C_0$-semigroups see, e.g., \cite{EN}.

\begin{dfn*}
We call a real operator $T$
\emph{real-embeddable} if there exists a real strongly continuous  
semigroup $(T(t))_{t\geq 0}$ with $T=T(1)$. Analogously, we call a positive operator $T$ on a Banach lattice \emph{positively embeddable} if there exists a positive strongly continuous 
semigroup $(T(t))_{t\geq 0}$ with $T=T(1)$.
\end{dfn*} 
Analogously, one defines real or positive embeddability into a (strongly continuous) group. 

We study in this paper the following questions. 
\begin{ques*}\label{ques}
\begin{itemize}
\item[1)] When is a real operator real-embeddable? 
\item[2)]
When is a positive operator positively embeddable? 
\end{itemize}
\end{ques*}
Clearly, these two questions are closely related and also related to the question when a Markov matrix/operator is embeddable into a Markov semigroup.

A natural idea to embed $T$ is to define a logarithm $A:=\log T$ using  some functional calculus provided that certain spectral properties of $T$ are satisfied. In this case, $A$ generates a strongly continuous semigroup $(T(t))_{t\geq 0}$ with $T(1)=T$. Alternatively, one can use functional calculus to define the semigroup directly by $T(t):=T^t$. However, the semigroup operators need not satisfy the additional requirements to be real or positive for all $t\geq 0$. 
Moreover, if $T$ is embeddable into a strongly continuous semigroup, the generator (or the semigroup) need not be of the form provided via some functional calculus. 
Note also that an embedding into a semigroup is not necessarily unique, not even in the case of Markov matrices, see  \cite{speakman}.

Even though the statement of the embedding problem is rather simple,  in the Markov case only embedding of $d\times d$ matrices for $d\leq 4$ is well understood, see \cite{kingman,BS,cuthbert,johansen,carette,4x4}. 
For the positive case we present, among other results, a full characterisation in dimension $2$. Real embeddability in the finite-dimensional case was characterised in \cite[Thm.~1]{culver} and  \cite[Thm.~6.4.15]{horn-johnson}, see also Theorem \ref{thm:real} below.

Note that embeddability into a semigroup with a certain property automatically implies existence of the roots of all orders with this property.
 The problem of finding a stochastic root (of some order) of a stochastic matrix is known as the (discrete) embedding problem, see, e.~g., \cite{singer-spilerman7374,guerry2013,guerry2014}. The computation of stochastic roots of order $p$ is treated in \cite{higham-lin}. 

The  question, which positive matrices have positive roots of all  orders, was posed in \cite{johnson} in the study of so-called $M$-matrices. Recently, this study was continued in \cite{van-brunt} where  the above question 2) is  addressed  for finite positive  matrices, whereas the finite real case was treated in \cite{culver} and \cite{horn-johnson}. 
For aspects of complexity  of finding a positive square root of a finite positive matrix see  \cite{bausch-cubitt}.

While we investigate real embeddability for general bounded linear operators on Banach spaces, in our study of positive embeddability  we restrict our considerations to (finite and infinite) matrices. By an \emph{infinite matrix} $T$ we mean a bounded linear operator on any of the complex sequence spaces $c_0$, $\ell^p$, $1\leq p<\infty$, having $T$ as the representation with respect to the canonical unit vectors. Note that (strict) positivity in this context is equivalent to (strict) positivity of all entries of the matrix.

The paper is organised as follows. We first treat the real case in Section~\ref{sec:real}. Section~\ref{sec:pos-general} is devoted to positive embeddability of general finite and infinite matrices, while a complete answer in the $2\times 2$ 
case (following by a $3\times 3$ example) is  in Section~\ref{sec:2x2}. We finish the paper by showing that, in contrary to the finite-dimensional case,  real embeddability is typical for infinite matrices in Section~\ref{sec:typical}.

\textbf{Notation.}
 The set of real  $d\times d$ matrices is denoted by $\R^{d\times d}$ while the space of  bounded linear operators on a Banach space $X$ is denoted by  $\mathcal{L}(X)$. For the identity operator on $X$ we use the letter  $I$. The linear hull 
of vectors $x_1,\ldots, x_k\in X$ is  $\lin\{x_1,\ldots,x_k\}$.

  For an operator $T\in\mathcal{L}(X)$ we write $\sigma(T)$ for its  spectrum while $\rho(T)$ denotes its resolvent set. For  $\lambda\in\rho(T)$, the resolvent of $T$ at $\lambda$ is  $R(\lambda,T)$. Moreover, $\ker~T$ and $\rg~T$ are the kernel and the  range of $T$, respectively. We denote by $\codim(\rg~T):=\dim(X/\rg~T)$ the codimension of the range of $T$ in $X$. 

Finally, we use the standard notation $c_0$ and $\ell^p$, $1\leq p<\infty$, for the space of null sequences endowed with the supremum norm and the space of $p$-summable sequences with the $p$-norm, respectively.

\section{Preliminaries and real embeddability}\label{sec:real}

We recall the following  necessary condition for embeddability of an operator $T$ on a Banach space $X$ into a $C_0$-semigroup, i.e., the existence of a $C_0$-semigroup $(T(t))_{t\geq 0}$ on $X$ with $T=T(1)$, see \cite[Thm.~3.1]{eis}.

\begin{prop}\label{prop:spectrum-zero}
Let $X$ be a Banach space
 and $T\in\mathcal{L}(X)$. If $T$ can be embedded into a $C_0$-
semigroup, then both $\dim(\ker T)$ and $\codim(\rg T)
$
are equal to zero or to infinity.
\end{prop}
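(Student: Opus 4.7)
My plan is to exploit that embeddability gives roots of $T$ of every order: setting $S_n := T(1/n)$, I have $S_n^n = T$ for each $n \in \bbN$. The key algebraic observation is that on any finite-dimensional $S_n$-invariant subspace on which $T$ vanishes, $S_n$ becomes nilpotent with index bounded by the dimension. Combined with strong continuity of the semigroup at $0$, this should force the relevant finite dimension or codimension to be zero.

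For the kernel, I would first suppose $d := \dim \ker T < \infty$. Since $S_n$ commutes with $T$, the identity $T(S_n x) = S_n(T x) = 0$ shows $\ker T$ is $S_n$-invariant. From $S_n^n|_{\ker T} = T|_{\ker T} = 0$, the restriction $S_n|_{\ker T}$ is a nilpotent endomorphism of the $d$-dimensional space $\ker T$, hence of index at most $d$. Therefore $T(d/n) x = S_n^d x = 0$ for every $x \in \ker T$. Letting $n \to \infty$ and invoking strong continuity at $0$ gives $x = \lim_n T(d/n)x = 0$, so $\ker T = \{0\}$.

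For the range the argument is dual. Suppose $d := \codim \rg T < \infty$. Commutativity again gives $S_n(\rg T) \subseteq \rg T$, so $S_n$ descends to an operator $\bar S_n$ on the $d$-dimensional quotient $X/\rg T$ satisfying $\bar S_n^n = \bar T = 0$. The same nilpotency-index argument yields $\bar S_n^d = 0$, which means $T(d/n) x \in \rg T$ for every $x \in X$. Sending $n \to \infty$ and using closedness of $\rg T$ then gives $x = \lim_n T(d/n) x \in \rg T$, hence $\rg T = X$.

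The one technical point I expect to dwell on is the closedness of $\rg T$ when $\codim \rg T$ is finite; subspaces of finite codimension need not be closed in general, but for the \emph{range} of a bounded operator this is a standard consequence of the open mapping theorem applied to the surjection $(x,f) \mapsto Tx + f$ from $X \oplus F$ onto $X$, where $F$ is any finite-dimensional algebraic complement of $\rg T$. Everything else reduces to two elementary ingredients: nilpotent endomorphisms of a $d$-dimensional space have index at most $d$, and $C_0$-semigroups are strongly continuous at the origin.
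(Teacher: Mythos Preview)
The paper does not supply its own proof of this proposition; it is simply recalled from \cite[Thm.~3.1]{eis}. So there is nothing in the paper to compare your argument against line by line.

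That said, your argument is correct and is essentially the standard one. Both halves rest on the same mechanism: the roots $S_n=T(1/n)$ leave the relevant finite-dimensional object ($\ker T$, respectively $X/\rg T$) invariant and act nilpotently there, so their nilpotency index is bounded by the dimension $d$ independently of $n$; hence $T(d/n)$ already kills $\ker T$ (resp.\ maps $X$ into $\rg T$), and strong continuity at $0$ finishes. The one genuinely delicate point is the passage to the limit in the range case, which requires $\rg T$ to be closed. Your justification via the open mapping theorem applied to $(x,f)\mapsto Tx+f$ on $X\oplus F$ is the right one: this map is bounded and surjective with kernel $\ker T\times\{0\}$, so it induces a topological isomorphism $(X/\ker T)\oplus F\to X$ under which $\rg T$ corresponds to the closed factor $(X/\ker T)\oplus\{0\}$. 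Nothing is missing.
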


\subsection{Finite-dimensional case}\label{subsec:real-finite}

We first consider finite matrices and note that every embeddable finite matrix is necessarily invertible by Proposition \ref{prop:spectrum-zero}. 
 Moreover, every finite invertible real matrix $T$ having a real square root  $S$ satisfies $\det T =(\det S)^2>0$. This gives a simple necessary condition for real (and positive) embeddability in the finite-dimensional case.

We start with real embeddability of Jordan blocks being  essential for the general case treated in Theorem \ref{thm:real} below.
We observe in the following example that Jordan blocks to positive eigenvalues are real-embeddable. 
As we will see, the reason is that the function $z\mapsto z^t$ is holomorphic in a neighbourhood of every positive number and has real coefficients in the corresponding Taylor series (or, equivalently, is real if $z$ is real).
  In Remark~\ref{rem:dunford}~2) we give a general property of a function $f$ making $f(T)$ -- obtained via Dunford's functional calculus -- a real operator if $T$ is a real operator. 
  Note that Jordan blocks to negative eigenvalues are not real-embeddable, see Theorem~\ref{thm:real} below. 

\begin{ex}[Jordan blocks to positive eigenvalues]\label{ex:jordan-real}
Let $\lambda>0$, $d\in\N$, and consider the $d$-dimensional Jordan block 
$$
J:=\left(
\begin{matrix}
\lambda & 1& 0 & \ldots& 0\\
0 & \lambda & 1 &\ldots & 0\\
& & &\cdots\\
0& 0 & 0& \ldots & \lambda
\end{matrix}
\right).
$$
We show that $J$ is embeddable into a real group. 
Observe that 
$$J=\lambda \left(
\begin{matrix}
1 & \frac1\lambda& 0 & \ldots& 0\\
0 & 1 & \frac1\lambda &\ldots & 0\\
& & &\cdots\\
0& 0 & 0& \ldots & 1
\end{matrix}
\right),$$ so after a  change of basis with a real transformation matrix we can assume 
that $\lambda=1$. 
Writing $J= I +N$ if $\lambda =1$ and using that $N$ is nilpotent of index $d$ one obtains by virtue of Dunford's functional calculus a real group $T(\cdot)$ where 
$$
T(t):=(I+N)^t=\sum_{n=0}^{d-1} \binom{t}{n}N^n=\left(
\begin{matrix}
1 & t& \frac{t(t-1)}{2} & \ldots& \frac{t(t-1)\ldots(t-d+2)}{(d-1)!}\\
0 & 1 & t&\ldots &\frac{t(t-1)\ldots(t-d+3)}{(d-2)!} \\
& & &\cdots\\
0& 0 & 0& \ldots & 1
\end{matrix}
\right)
$$
for $t\in \R$ and $\binom{t}{n}:=\frac{t(t-1)\cdots(t-n+1)}{n!}$. 
Indeed, $T(0)=I$ and $T(1)=J$ are clear while the property $T(t+s)=T(t)T(s), t, s\in \R$, follows from the identity $z^{t+s}=z^{t}z^{s}$ for $z>0$ and $t, s\in\R$. 

Note that this formula coincides with the one in  \cite[Def.~1.2]{higham-book}.
One can already guess it without invoking Dunford's functional calculus by computing the powers of $J=I+N$ with the binomial theorem and then extrapolating for $t\not\in\mathbb{N}$. 
Here, it remains to show that the generalised binomial coefficients given by $a_j(t):=\binom{t}{j}$ 
satisfy the relation 
\begin{equation}\label{eq:binom}
a_j(t+s)=a_j(s)+a_1(t)a_{j-1}(s)+\ldots+a_{j-1}(t)a_1(s)+a_j(t)
\end{equation}
for every $j\in\N$ and $t,s\in\R$. But this is the well-known Chu-Vandermonde identity.

In Example \ref{ex:pos-Jordan} below we will see that $J$ is embeddable into a positive semigroup if and only if $d\leq 2$. 
\end{ex}

We will use the following classical fact from linear algebra.
\begin{fact}\label{fact}
Let $J$ be a Jordan block corresponding to $\lambda\in\C$ with $\lambda\neq 0$. Then $J^2$ is up to a change of basis a Jordan block corresponding to $\lambda^2$.
\end{fact}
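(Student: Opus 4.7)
The plan is to reduce to showing that the nilpotent part of $J^2 - \lambda^2 I$ is similar to a single nilpotent Jordan block of the same size as $J$. Write $J = \lambda I + N$, where $N$ is the standard $d\times d$ nilpotent with ones on the superdiagonal. Then
\begin{equation*}
J^2 = \lambda^2 I + 2\lambda N + N^2 = \lambda^2 I + N(2\lambda I + N).
\end{equation*}
Since $\lambda \neq 0$, the matrix $2\lambda I + N$ is invertible (it is a scalar multiple of the identity plus a nilpotent), so $N(2\lambda I + N)$ is nilpotent of index exactly $d$, because for every $k$,
\begin{equation*}
\bigl(N(2\lambda I + N)\bigr)^k = N^k (2\lambda I + N)^k,
\end{equation*}
where I would use that $N$ commutes with $2\lambda I + N$. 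The invertible factor on the right does not change the rank, so $\rg\bigl((2\lambda N + N^2)^k\bigr) = \rg(N^k)$ for all $k\geq 0$.

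From here the conclusion is routine: the Jordan structure of a nilpotent matrix is determined by the sequence of ranks of its powers, so $2\lambda N + N^2$ and $N$ have the same Jordan type. Since $N$ is a single nilpotent Jordan block of size $d$, so is $2\lambda N + N^2$; hence there is an invertible $S$ with $S^{-1}(2\lambda N + N^2)S = N$. Conjugating $J^2 = \lambda^2 I + (2\lambda N + N^2)$ by $S$ yields $\lambda^2 I + N$, i.e.\ the Jordan block of size $d$ with eigenvalue $\lambda^2$, which is the required statement.

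There is no real obstacle here: the only place one must be slightly careful is to use the assumption $\lambda \neq 0$ to ensure invertibility of $2\lambda I + N$. Without that hypothesis the statement fails, since $J = N$ gives $J^2 = N^2$, which has Jordan type $(\lceil d/2\rceil, \lfloor d/2\rfloor)$ rather than a single block.
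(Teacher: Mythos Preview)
Your proof is correct and follows essentially the same approach as the paper: write $J=\lambda I+N$, compute $J^2=\lambda^2 I+N(2\lambda I+N)$, and use $\lambda\neq 0$ to conclude that $N(2\lambda I+N)$ is nilpotent of index $d$. The paper's argument is simply a terser version of yours; you have merely spelled out explicitly (via commutativity and the rank sequence) why index $d$ forces similarity to a single nilpotent Jordan block, and you have also noted the failure for $\lambda=0$, which the paper mentions as well.
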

To see this, let $d$ be the dimension of $J$ and write $J=\lambda I + N$, where $N$ is nilpotent of index $d$. So $J^2=\lambda^2 I + N(2\lambda I+N)$. Since $\lambda \neq 0$, $N(2\lambda I+N)$ is nilpotent of index $d$, and therefore $J^2$ is up to a change of basis a Jordan block corresponding to $\lambda^2$.
Note that for $\lambda=0$ the assertion is false already for $d=2$.

The following result is known, see  \cite[Thm.~1]{culver} and  \cite[Thm.~6.4.15]{horn-johnson}. 
For the reader's convenience we present here a direct proof.

\begin{thm}[Characterisation of real embeddability in finite dimensions]\label{thm:real}
  Let $T$ be a real $d\times d$ matrix with $0\notin\sigma(T)$. 
Then the following assertions are equivalent.
\begin{enumerate}
\item[(i)] $T$ is embeddable into a real semigroup.
\item[(ii)] $T$ has a real square root.
\item[(iii)] In the Jordan normal form of $T$ every Jordan block to every negative eigenvalue appears evenly many times.
\end{enumerate} 
\end{thm}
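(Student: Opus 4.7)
The plan is to prove the cycle (i) $\Rightarrow$ (ii) $\Rightarrow$ (iii) $\Rightarrow$ (i). The first implication is immediate: if $(T(t))_{t\geq 0}$ is a real semigroup with $T(1)=T$, then $S:=T(1/2)$ is a real square root of $T$.

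For (ii) $\Rightarrow$ (iii), I would let $S$ be a real square root of $T$ and read off the complex Jordan structure of $T = S^2$ from that of $S$. By Fact~\ref{fact}, each Jordan block of $S$ for a non-zero eigenvalue $\mu$ produces, up to similarity, a Jordan block of the same size for $\mu^2$ in $T$. Since $S$ is real, its non-real eigenvalues appear in conjugate pairs with matching Jordan structure. I would then distinguish three cases for an eigenvalue $\mu$ of $S$: if $\mu$ is real, then $\mu^2 > 0$; if $\mu$ is non-real with non-zero real part, then $\mu^2$ is again non-real and the pair of Jordan blocks for $\mu$ and $\bar\mu$ squares to another non-real conjugate pair; finally, if $\mu=i\beta$ is purely imaginary with $\beta>0$, then $\mu^2=\bar\mu^2=-\beta^2<0$, and the two Jordan blocks for $\pm i\beta$ in $S$ square to two Jordan blocks of the same size for the negative eigenvalue $-\beta^2$ in $T$. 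Hence Jordan blocks of $T$ for negative eigenvalues arise exclusively from purely imaginary eigenvalues of $S$ and must appear in pairs, which is precisely (iii).

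For (iii) $\Rightarrow$ (i) I would pass to the real Jordan form of $T$ and decompose it as a direct sum of three types of real-invariant summands, each embedded separately: (a) a single Jordan block $J_k(\lambda)$ for a positive eigenvalue $\lambda>0$, handled directly by Example~\ref{ex:jordan-real}; (b) the real-Jordan counterpart of a complex conjugate pair of blocks $J_k(\mu)\oplus J_k(\bar\mu)$ for a non-real eigenvalue $\mu$; (c) a pair $J_k(-\mu)\oplus J_k(-\mu)$ of identical Jordan blocks for a negative eigenvalue $-\mu<0$, available thanks to assumption (iii). For (b), the block is a real operator whose spectrum avoids $(-\infty, 0]$, so its principal logarithm is defined via Dunford's functional calculus. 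Because $\log\bar z = \overline{\log z}$ on $\C\setminus(-\infty,0]$, this logarithm is real, and exponentiating it yields the required real semigroup.

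The main technical point, and principal obstacle, is case (c). Here I would apply a realification trick: view the complex $k\times k$ Jordan block $J_k(i\sqrt{\mu})$ as a real $2k\times 2k$ operator $\tilde J$ via the identification $\C^k\cong\R^{2k}$. Its spectrum as a complex operator is $\{\pm i\sqrt{\mu}\}$, still disjoint from $(-\infty,0]$, so the argument of (b) shows that the principal logarithm $\log\tilde J$ is real. Moreover, Fact~\ref{fact} together with realification shows that $\tilde J^2$ is real-similar to $J_k(-\mu)\oplus J_k(-\mu)$. Thus the real semigroup generated by $2\log\tilde J$, conjugated by the appropriate real change of basis, embeds the pair $J_k(-\mu)\oplus J_k(-\mu)$, completing the construction.
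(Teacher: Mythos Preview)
Your proof is correct and follows essentially the same route as the paper: the cycle (i)$\Rightarrow$(ii)$\Rightarrow$(iii)$\Rightarrow$(i), with (ii)$\Rightarrow$(iii) via the paired Jordan structure of a real square root at purely imaginary eigenvalues, and (iii)$\Rightarrow$(i) by embedding each real-invariant summand separately. The only cosmetic differences are in the execution of the two nontrivial cases: for non-real eigenvalue pairs the paper embeds one block and extends by conjugation, whereas you invoke the principal logarithm directly (this is the content of Theorem~\ref{thm:real-ln}(a), proved later); and for a pair of negative-eigenvalue blocks the paper writes down the explicit real square root $S=\begin{pmatrix}0&I\\ J&0\end{pmatrix}$, while your realification of $J_k(i\sqrt{\mu})$ produces another real square root with the same purely imaginary spectrum---both then feed into the already-handled non-real case.
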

\begin{proof}
We begin with an observation. Let $S$ be a real matrix with eigenvalue $\lambda\in\C\setminus\R$. Consider  a Jordan block of dimension $k$ to $\lambda$ with corresponding Jordan chain 
$x_1,\ldots,x_k$.
Since $S$ is real,  $\ol{x_1},\ldots,\ol{x_k}$  is a Jordan chain
of $S$ corresponding to $\ol{\lambda}$. 
By Fact \ref{fact}, 
$S^2$ has two Jordan blocks  of the same dimension, one corresponding to $\lambda^2$ and one to $\ol{\lambda}^2$.

We now come to the proof of the assertion. 
Clearly, (i)$\Rightarrow$(ii). Moreover, (ii)$\Rightarrow$(iii) follows from 
the observation above for a real square root $S$ of $T$ and $\lambda\in \sigma(S)\cap i\R$ (if it exists).

(iii)$\Rightarrow$(i) Consider the Jordan normal form of $T$. For a $k$-dimensional Jordan block $J$ with an eigenvalue $\lambda$ denote by $x_1,\ldots,x_k$ the corresponding Jordan chain.
Let 
$H(J):=\lin\{x_1,\ldots,x_k\}$.
Note that if $\lambda$ is real, then the Jordan chain
can be chosen to be real. If $\lambda\in\C\setminus\R$, then by the above observation 
$T$ has a Jordan block $\ol{J}$ to $\ol{\lambda}$ of the same dimension  with Jordan chain
$\ol{x_1},\ldots,\ol{x_k}$. Note that 
$$
H(J)\oplus H(\ol{J})=\lin\{x_1+\ol{x_1},\ldots,x_k+\ol{x_k},i(x_1-\ol{x_1}),\ldots,i(x_k-\ol{x_k})\},
$$
 where the vectors on the right hand side are real and linearly independent.  We will embed  $T$ restricted to $H(J)$ for real eigenvalues and to $H(J)\oplus H(\ol{J})$ for nonreal eigenvalues  into a real semigroup separately.

We begin with $\lambda\in\C\setminus\R$ and embed $T$ on $H(J)$ into  some, not necessarily real, semigroup $S(\cdot)$ (using for example the logarithm). Define $S(t)\ol{x_j}:=\ol{S(t)x_j}$ for every $j\in\{1,\ldots,k\}$ and $t\geq 0$ and extend $S(\cdot)$ linearly to $H(J)\oplus H(\ol{J})$.  

We claim that the obtained semigroup on $\lin\{x_1,\ldots,x_k,\ol{x_1},\ldots,\ol{x_k}\}$ is real. Indeed, every $S(t)$ has real  values on linearly independent real vectors $x_1+\ol{x_1},\ldots,x_k+\ol{x_k},i(x_1-\ol{x_1}),\ldots,i(x_k-\ol{x_k})$.

To embed $T$ on $H(J)$ for real $\lambda$ we can assume without loss of generality that $T=J$, since the  change of basis is provided by a real matrix for such $J$.
By  Example~\ref{ex:jordan-real}, Jordan blocks to positive eigenvalues are embeddable into real semigroups.
It remains now to embed Jordan blocks corresponding to negative 
 eigenvalues, which by (iii) come in pairs in the Jordan normal form of $T$. To do this, take such a Jordan block $J$.  
The real matrix 
$
S:=\left(
\begin{matrix}
0 & I\\
J & 0
\end{matrix}
\right)
$
written in the block form satisfies 
$S^2=\left(
\begin{matrix}
J & 0\\
0 & J
\end{matrix}
\right)$
and $\sigma(S)\subset i\R\setminus\{0\}$  by the spectral mapping theorem, so
has in particular no negative eigenvalues. So by the above we can embed $S$ and hence $\left(
\begin{matrix}
J & 0\\
0 & J
\end{matrix}
\right)$
into a real semigroup. The proof is complete.
\end{proof}
For an algorithm of constructing a real square root see  \cite{higham}.

\smallskip

\subsection{Infinite-dimensional case}
The rest of this section is devoted to the extensions of 
the characterisation in Theorem \ref{thm:real} to 
infinite-dimensional spaces where the situation is more complex. 

 We first recall some basic definitions of complexifications of real Banach spaces, see, e.g.,  \cite{MST} or  \cite[§II.11]{schaefer} for details. Let $Y$ be a real Banach space and 
let $X:=Y+iY$ be its complexification
endowed with the norm 
$$
\|y_1+iy_2\|:=\sup_{t\in[0,2\pi]}\| \cos t\cdot y_1 - \sin t \cdot
y_2\|
$$ turning $X$ into a Banach space.  
The dual space of $X$ is of the form $X'=Y'+iY'$. We call an operator $T$ on $X$ \emph{real} if $TY\subset Y$. Moreover, for an operator $T$ on $X$ we define its complex conjugate by $\overline{T}x:=\overline{T\ol{x}}$, where $\ol{y_1+iy_2}:=y_1-iy_2$. It is easy to verify that $\ol{T+S}=\ol{T}+\ol{S}$, $\ol{TS}=\ol{T}\cdot
\ol{S}$, $\lim \ol{T_n}=\ol{\lim T_n}$ and $\ol{T'}=(\ol{T})'$. Finally, $T$ is real if and only if $T=\ol{T}$, $T'$ is real whenever $T$ is, and the spectrum of a real operator is invariant under complex conjugation. 

We first generalise the 
sufficient spectral condition for Markov matrices given in  \cite[Prop.~1]{davies}. Recall that an operator $T$ is called \emph{sectorial} if there exists a sector $\Sigma_\delta:=\{z\in\C:\,|\arg z| \leq \delta\}\cup\{0\}$ with $\delta\in (0,\pi)$ such that $\sigma(T)\subset \Sigma_\delta$ and $\sup_{\lambda\notin\Sigma_\delta}\|\lambda R(\lambda,T)\|<\infty$. 
 Equivalently, $T$ is sectorial if 
$(-\infty,0)\subset\rho(T)$
and $\sup_{s<0}\|sR(s,T)\|<\infty$, see, e.g., \cite[Prop.~2.1.1]{haase-book}.

\begin{thm}[Sufficient condition for real embeddability]\label{thm:real-ln}
\begin{enumerate}
\item Let $T$ be a  real operator 
with $\sigma(T)\subset \C\setminus (-\infty,0]$. Then $T$ is embeddable into a real group with bounded generator. 
\item Let $T$ be a real sectorial  operator with dense range.  Then $T$ is real-embeddable.
\end{enumerate}
\end{thm}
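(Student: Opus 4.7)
For part \textbf{(1)}, the natural plan is to construct a bounded logarithm of $T$ via Dunford's (holomorphic) functional calculus: since $\sigma(T)$ is compact and disjoint from $(-\infty,0]$, the principal branch $\log$ is holomorphic on an open neighbourhood $U$ of $\sigma(T)$, and I would set $A:=\log T$. The composition rule of the functional calculus then yields $\exp(A)=T$, so $T(t):=\exp(tA)$ is a norm-continuous group with $T(1)=T$ and bounded generator $A$. To check that $T(\cdot)$ is real, I would invoke the principle anticipated in Remark~\ref{rem:dunford}~2): any holomorphic $f$ on a conjugation-invariant neighbourhood of $\sigma(T)$ with $f(\bar z)=\overline{f(z)}$ produces a real operator $f(T)$ when $T$ is real. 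This applies to the principal logarithm since $\log\bar z=\overline{\log z}$ on $\C\setminus(-\infty,0]$, and a symmetric $U$ is available because the spectrum of a real operator is invariant under complex conjugation. Hence $A$, and therefore every $T(t)$, is real.

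For part \textbf{(2)}, the same idea works but now $\sigma(T)$ may touch the origin, so the bounded Dunford calculus no longer applies. Instead I would rely on the holomorphic functional calculus for sectorial operators developed in \cite{haase-book}: under the hypothesis of (2), the fractional powers $T^t$, $t\geq 0$, are well defined, bounded, and form a $C_0$-semigroup with $T^1=T$ whose generator is (a realisation of) $\log T$; dense range of $T$ is what ensures strong continuity at $0$. Reality of each $T^t$ then follows from exactly the same conjugation-symmetry principle as above, now applied to $z\mapsto z^t$ on $\C\setminus(-\infty,0]$, where $\overline{z^t}=\bar z^t$. The defining contours in the extended calculus can again be chosen symmetric about the real axis because both $\Sigma_\delta$ and $\sigma(T)$ are conjugation-invariant.

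The main technical point I expect to be delicate is confirming that the reality principle used in (1) survives the \emph{extended} functional calculus of (2), which is constructed via a regulariser together with a limiting procedure rather than a single Dunford integral. Concretely, one must check once that a standard regulariser such as $z/(1+z)^2$ and the associated approximations commute with complex conjugation; once this is established, the reality of $T^t$ follows verbatim as in (1), and the proof of (2) reduces in spirit to the pattern of part (1).
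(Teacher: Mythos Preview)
Your approach to part (a) is essentially identical to the paper's: define $A=\log T$ via Dunford's functional calculus along a conjugation-symmetric contour, verify $\overline{A}=A$ using $R(\overline{\lambda},T)=\overline{R(\lambda,T)}$ and $\log\overline{z}=\overline{\log z}$, and conclude that $e^{tA}$ is real via its power series.

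For part (b) your route is correct but differs from the paper's. You propose to push the conjugation-symmetry principle through the \emph{extended} sectorial calculus, which, as you rightly note, requires checking that the regulariser step (e.g.\ with $z/(1+z)^2$) commutes with conjugation. The paper bypasses this entirely by invoking the Balakrishnan representation
\[
T(t)x=-\frac{\sin(t\pi)}{\pi}\int_0^\infty s^{t-1}R(-s,T)Tx\,ds,\qquad t\in(0,1),
\]
which is a real integral of real operators (since $R(-s,T)$ is real for $s>0$ by the computation in (a), and $T$ is real). Reality of $T(t)$ for $t\in(0,1)$ is then immediate, and the semigroup law extends it to all $t\geq 0$. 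Your argument is more structural and would establish the general reality principle for the sectorial calculus once and for all; the paper's argument is shorter and avoids any discussion of regularisers, at the price of being specific to the fractional powers at hand.
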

\begin{proof}
(a) 
Let $\log$ denote the principal value of the complex logarithm. Note that $\log \ol{z}=\ol{\log z}$ for every $z\in \C\setminus (-\infty,0]$.

Denote $r:=\frac{\dist(\sigma(T), (-\infty,0])}{2}>0$. Denote by $\gamma:[-1,1]\to\C$ a positively directed path around $\sigma(T)$ beginning at $2\|T\|$ whose image consists of three parts:
\begin{itemize}
\item[(i)]  a half circle with radius $r$ around $0$ in the right half plane, 
\item[(ii)] an almost full circle with radius $2\|T\|$ around $0$ up to points with negative real part and imaginary part in $(-r,r)$,
\item[(iii)] two horizontal lines connecting the two partial circles. 
\end{itemize}


\begin{center}
\setlength{\unitlength}{1cm}
\thicklines
\begin{tikzpicture}
 
\draw[->] (-2,0) -- (2,0);
\draw[->] (0,-2) -- (0,2);

\draw [domain=-90:90] plot ({0.2*cos(\x)}, {0.2*sin(\x)});
\draw (0,0.2) -- (-1.49,0.2);
\draw (0,-0.2) -- (-1.49,-0.2);
\draw[-latex]
  (-1.49,-0.2) arc (-172.5:172.5:1.5);

\draw (0.2,0)  
  node [below] {$r$};
\draw (1.5,0)    node [below] {\small $ 2\|T\|$};
\draw (1.1,1.0)  
  node [above] {$\gamma$};


\end{tikzpicture}
\end{center}


Moreover, we take $\gamma$ to be symmetric with respect to the real line with $\gamma(-t)=\ol{\gamma(t)}$, $t\in[-1,1]$. Define now the bounded operator 
$$
A:=\frac1{2\pi i}\int_\gamma R(\lambda,T)\log \lambda \, d\lambda
$$
which generates the uniformly continuous group $(e^{tA})_{t\in\R}$.
By Dunford's functional calculus, we have $e^A=T$. 

We show first that $A$ is real. Indeed, for every $\lambda\in\rho(T)$, 
$$
R(\ol{\lambda},T)=(\ol{\lambda-T})^{-1}=\ol{R(\lambda, T)}
$$
and therefore 
$$
\ol{A}=-\frac1{2\pi i}\int_\gamma R(\ol{\lambda},T)\log \ol{\lambda} \, d\ol{\lambda} =-\frac1{2\pi i}\int_{-\gamma} R(\lambda,T)\log \lambda \, d\lambda =A,
$$
showing that $A$ is a real operator. By
$$
e^{tA}=\sum_{n=0}^\infty \frac{t^nA^n}{n!},
$$
the operator $e^{tA}$ is real for every $t\in\R$.
\medskip 

(b) By the functional calculus for sectorial operators, see \cite[Prop. 3.1.1 and 3.1.15]{haase-book}, $T$ can be embedded into an analytic semigroup $T(\cdot)$. Moreover, by the Balakrishnan representation \cite[Prop. 3.1.12]{haase-book}, 
$$
T(t)x=-\frac{\sin (t\pi)}{\pi}\int_0^\infty s^{t-1}R(-s,T)Tx \, ds
$$
holds for every $x\in X$ and every $t\in (0,1)$. Since $R(-s,T)$ is real as discussed in (a), $T(t)$ is real for every $t\in (0,1)$ and hence for every $t\geq 0$.
\end{proof}

\begin{remark}\label{rem:dunford}
\begin{itemize}
\item[1)]
Alternatively, one can define the (semi)group in the proof of Theorem \ref{thm:real-ln} directly by 
$$
T(t):= \frac1{2\pi i}\int_\gamma e^{t\log \lambda }R(\lambda,T) \, d\lambda
$$
for the same $\gamma$ as in the proof for part (a) and an appropriate modification of it for part (b) and show that it is real and, in the case of (a), uniformly continuous.
\item[2)]
 Inspecting the above proof one observes the following general property of Dunford's functional calculus: Let $U\subset \C$ be open and $f\in C(U)$  satisfy $f(\ol{z})=\ol{f(z)}$ for every $z\in U$, then $f(T)$ is real for every real operator $T$ with $\sigma(T)\subset U$. 
\end{itemize}
\end{remark}

The following example shows that the spectrum of a real-embeddable operator can be an arbitrary compact conjugation invariant subset of $\C$.
This shows that the sufficient condition in Theorem \ref{thm:real-ln} is far from being necessary (see also Theorem \ref{thm:shift} below). Moreover, a real operator which is not real-embeddable can have arbitrary compact  conjugation invariant set intersecting $(-\infty,0]$ as spectrum, see Example \ref{ex:real-spectrum-non-embed} below.

\begin{ex}[Spectrum of a real-embeddable operator]\label{ex:real-spectrum-embed}
Let $K\subset \C$ be a compact set with $\ol{K}=K$. We construct a real-embeddable operator $T$ with $\sigma(T)=K$. 
The construction is a natural modification of \cite[Example 2.4.]{eis}. 

Case 1: The point $0$ is not an  isolated point of $K$. Take a dense countable conjugation invariant subset of $K\setminus\{0\}$. Write this set as a sequence $(a_n)_{n=1}^\infty$, where $a_{2n-1}=\ol{a_{2n}}$ for every $n\in\N$, in particular, every negative number  appears twice.  

Consider, say, $X:=\ell^2$ with the standard (real) basis $(e_n)$ and define $(v_n)\subset X$ by $v_{2n-1}:=e_{2n-1}+ie_{2n}$ and $v_{2n}:=e_{2n-1}-ie_{2n}$ for every $n\in\N$. Then $(v_n)$ is clearly a (Schauder) basis of $X$. Define finally $T\in \mathcal{L}(X)$ by $Tv_n:=a_nv_n$. We first show that $T$ is real. Indeed, for every $n\in\N$
$$
Te_{2n-1}=\frac{a_{2n-1}v_{2n-1}+a_{2n}v_{2n}}{2}=e_{2n-1}\frac{a_{2n-1}+a_{2n}}2 + e_{2n}\frac{i(a_{2n-1}-a_{2n})}{2}
$$
which is a real vector by the construction of $(a_n)$. Analogously, $Te_{2n}$ is a real vector for every $n\in\N$ and thus $T$ is real. 

Define now $A\in\mathcal{L}(X)$ by $A v_n:=\log a_n\cdot v_n$, $n\in\N$, where $\log$ denotes the principal value of the logarithm if $a_n\notin (-\infty,0]$ and any other (fixed) branch of the logarithm otherwise. Then $A$ is real by the same reason as $T$ and therefore generates a real semigroup embedding $T$.

Case 2: The point $0$ is an isolated point of $K$. Let $T_1$ be the real operator constructed as above with spectrum $\sigma(T_1)=K\setminus\{0\}$. Consider $T_2=0$ on $\ell^2$ which is real-embeddable by Remark \ref{rem:zero-op-real} below. Thus $T_1\oplus T_2$ on $\ell^2\oplus \ell^2$ is 
real-embeddable with $\sigma(T)=K$. 
\end{ex}

\begin{remark}
As the construction in Example \ref{ex:real-spectrum-embed} (Case 1) shows, the operator $-I$ is real-embeddable on $\ell^p$ for every $1\leq p< \infty$. However, for every $p\notin\{1,2\}$, $-I$ is not embeddable into a real \emph{contractive} semigroup, see Remark \ref{rem:vera} below. 
\end{remark}

Inspired by Theorem \ref{thm:real}, we present a simple necessary condition for real embeddability in infinite dimensions. First we need some preparation motivated by the concept of Jordan chains from finite-dimensional linear algebra.

\begin{dfn}
We say that an operator $T$ on a Banach space $X$ has a \emph{(partial) Jordan block} of dimension $d\in\N$ corresponding to $\lambda\in\C$  if there exists a 
family of linearly independent vectors $x_1,\ldots,x_d\in X$ with $Tx_1=\lambda x_1$, $Tx_2=x_1+\lambda x_2$, $\ldots$, $Tx_d=x_{d-1}+\lambda x_d$.
By the existence of several Jordan blocks of $T$ we mean the existence of several such families which are linearly independent.
\end{dfn}

\begin{remark}\label{rem:Jordan}
As in linear algebra, an operator $T$ has a Jordan block of dimension $d$ to $\lambda$ if and only if 
$$
\ker((\lambda-T)^d)\setminus \ker((\lambda-T)^{d-1}) \neq \emptyset.
$$
Indeed, the family $x_1,\ldots,x_d$ defined by $x_d\in \ker((\lambda-T)^d)\setminus \ker((\lambda-T)^{d-1})$, $x_{d-1}:=(T-\lambda)x_d,\ldots, x_1:=(T-\lambda)x_2$ satisfies the requirement. 
\end{remark}

The following is a generalisation of the classical spectral mapping theorem for the point spectrum.

\begin{lemma}[Spectral mapping theorem for squares and Jordan blocks]\label{lem:SMT}
Let $S$ be an operator on a Banach space $X$, $d\in \N$ and $\lambda\in \C\setminus\{0\}$. Then $S^2$ has a Jordan block of dimension $d$ to $\lambda^2$ if and only if $S$ has a Jordan block of dimension $d$ to $\lambda$ or to $-\lambda$.
\end{lemma}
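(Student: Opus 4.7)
The plan is to use Remark~\ref{rem:Jordan} to recast the question as a comparison of generalised kernels, and then to exploit the commuting factorisation $\lambda^2 - S^2 = (\lambda - S)(\lambda + S)$. Writing $A := \lambda - S$ and $B := \lambda + S$, one has $AB = BA$; the crucial point is that $A + B = 2\lambda \neq 0$, which forces the polynomials $(z-\lambda)^d$ and $(z+\lambda)^d$ to be coprime in $\C[z]$ for every $d \geq 1$.

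First I would apply Bezout's identity to choose $p, q \in \C[z]$ with $p(z)(z-\lambda)^d + q(z)(z+\lambda)^d = 1$; evaluating at $S$ gives operators $p(S), q(S)$ commuting with $A$ and $B$ and satisfying $p(S) A^d + q(S) B^d = I$. The standard algebraic argument then yields the direct-sum decomposition
$$
\ker\bigl((\lambda^2 - S^2)^d\bigr) \;=\; \ker(A^d) \oplus \ker(B^d)
$$
for every $d \geq 0$: given $x$ with $A^d B^d x = 0$, split $x = q(S) B^d x + p(S) A^d x$; then $A^d\bigl(q(S) B^d x\bigr) = q(S) A^d B^d x = 0$ and symmetrically for the other summand, while $\ker(A^d) \cap \ker(B^d) = \{0\}$ falls straight out of $p(S)A^d + q(S)B^d = I$.

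With this decomposition in hand for both exponents $d$ and $d-1$, I would finish as follows. By Remark~\ref{rem:Jordan}, $S^2$ has a Jordan block of dimension $d$ to $\lambda^2$ iff $\ker((\lambda^2 - S^2)^d) \supsetneq \ker((\lambda^2 - S^2)^{d-1})$; the decomposition translates this into the disjunction $\ker(A^d) \supsetneq \ker(A^{d-1})$ or $\ker(B^d) \supsetneq \ker(B^{d-1})$. Since $\ker(B^d) = \ker((-\lambda - S)^d)$, a second application of Remark~\ref{rem:Jordan} identifies this with the existence of a Jordan block of dimension $d$ for $S$ corresponding to $\lambda$ or to $-\lambda$, as required.

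The only mildly subtle step is the direct-sum decomposition of kernels; however, since it rests purely on the Bezout identity it carries over verbatim from finite-dimensional linear algebra to an arbitrary Banach space, and no spectral theory is needed. The hypothesis $\lambda \neq 0$ enters the argument exactly here, ensuring coprimality of $(z-\lambda)^d$ and $(z+\lambda)^d$; unsurprisingly, $\lambda = 0$ is genuinely excluded, since nilpotent Jordan blocks of $S^2$ and $S$ do not obey this correspondence.
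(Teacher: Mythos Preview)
Your argument is correct and genuinely different from the paper's. The paper proves the ``only if'' direction by an explicit case analysis: starting from a witness $x \in \ker((\lambda^2-S^2)^d) \setminus \ker((\lambda^2-S^2)^{d-1})$, it distinguishes whether $(\lambda+S)^d x$ vanishes and, if not, whether $(\lambda-S)^{d-1}(\lambda+S)^d x$ vanishes, producing in each case a suitable Jordan chain for $S$ at $\pm\lambda$; the ``if'' direction is handled separately via Fact~\ref{fact}. Your Bezout/coprimality argument replaces all of this by the single structural identity $\ker((\lambda^2-S^2)^d)=\ker((\lambda-S)^d)\oplus\ker((\lambda+S)^d)$, which immediately yields both directions at once and makes the role of $\lambda\neq 0$ completely transparent. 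The trade-off is that the paper's proof is more bare-hands---no polynomial identities, just chasing vectors through commuting factors---whereas yours is cleaner and more conceptual, and would generalise without change to higher powers $S^n$ (using coprimality of the $(z-\omega\lambda)^d$ for $n$-th roots of unity $\omega$).
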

\begin{proof}
As in the classical case for $d=1$, we start with the identity
\begin{equation}\label{eq:square}
\lambda^2-S^2=(\lambda+S)(\lambda-S).
\end{equation}
For the ``only if'' direction, using Remark \ref{rem:Jordan}, assume that some $x\in X$ satisfies
\begin{equation}\label{eq:Jordan}
(\lambda^2-S^2)^dx=0,\  (\lambda^2-S^2)^{d-1}x\neq 0.
\end{equation}
There are several cases to consider. 

Case 1: $(\lambda+S)^dx=0$. Since \eqref{eq:square} and \eqref{eq:Jordan} imply $(\lambda-S)^{d-1}x\neq 0$, we are finished by Remark \ref{rem:Jordan}.

Case 2: $y:=(\lambda+S)^{d}x\neq 0$. By \eqref{eq:Jordan} we have
\begin{equation}\label{eq:product}
0=(\lambda-S)^dy=(\lambda+S)^d(\lambda-S)^dx.
\end{equation}

Case 2a: $(\lambda-S)^{d-1}y\neq 0$. Then we are finished by \eqref{eq:product} and Remark \ref{rem:Jordan}.

Case 2b: $(\lambda-S)^{d-1}y= 0$.  By the definition of $y$ this means 
$$
(\lambda+S)^d(\lambda -S)^{d-1}x=0.
$$
Defining $z:=(\lambda -S)^{d-1}x$ we obtain $(\lambda+S)^dz=0$
and, by \eqref{eq:Jordan}, $(\lambda+S)^{d-1}z\neq 0$, finishing the argument again by  Remark \ref{rem:Jordan}.
\smallskip

The ``if'' direction follows directly from Fact \ref{fact}.
\end{proof}

\begin{remark}
For $\lambda=0$, the ``only if'' direction in the above theorem holds with the same proof, whereas the ``if'' direction fails as the example $S=\begin{pmatrix} 0 & 1\\ 0&0  \end{pmatrix}$ shows. 
\end{remark}

The following result extends both Proposition \ref{prop:spectrum-zero} and Theorem \ref{thm:real}. 

\begin{thm}[Necessary condition for the existence of a real square root]\label{thm:real-spectrum-nec}
Let $T$ be a real operator 
with a real square root. If $T$ or $T'$ has a Jordan block of dimension $d$ to some $\lambda <0$,  then the number of such blocks is even or infinity. In particular, both $\dim \ker(\lambda-T)$ and $\codim\rg(\lambda-T)$ are zero, even or infinity for every $\lambda<0$. 
\end{thm}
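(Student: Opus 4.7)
My plan is to lift the opening observation from the proof of Theorem~\ref{thm:real} to the infinite-dimensional setting, replacing Fact~\ref{fact} by Lemma~\ref{lem:SMT} together with a direct-sum decomposition of the generalised eigenspace. Let $S$ be a real square root of $T$, fix $\lambda<0$, and set $\mu:=i\sqrt{-\lambda}\in i\R\setminus\{0\}$, so that $\mu^{2}=\lambda$ and $\overline{\mu}=-\mu$. The point is that a Jordan block of $T=S^{2}$ corresponding to $\lambda$ forces, via Lemma~\ref{lem:SMT}, a Jordan block of $S$ corresponding to $\mu$ or to $-\mu$, and since $S$ is real with $\overline{\mu}=-\mu$, the conjugation trick from the proof of Theorem~\ref{thm:real} will then pair these blocks up.

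The technical heart of the argument is a direct-sum decomposition of $\ker((\lambda-T)^{d})$ for every $d\in\N$. Since $\mu\ne 0$, the polynomials $(\mu-z)^{d}$ and $(\mu+z)^{d}$ are coprime in $\C[z]$, so by Bezout there exist $p,q\in\C[z]$ with $p(z)(\mu-z)^{d}+q(z)(\mu+z)^{d}=1$. Evaluating at $S$ and using $(\mu-S)(\mu+S)=\mu^{2}-S^{2}=\lambda-T$ gives the splitting
\[
\ker\bigl((\lambda-T)^{d}\bigr)=\ker\bigl((\mu-S)^{d}\bigr)\oplus\ker\bigl((\mu+S)^{d}\bigr).
\]
Because $S$ is real and $\overline{\mu}=-\mu$, complex conjugation $v\mapsto\overline{v}$ is a dimension-preserving antilinear bijection between the two summands, so $\dim\ker((\lambda-T)^{d})=2\dim\ker((\mu-S)^{d})$ is zero, even, or infinite for every $d\in\N$.

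The Jordan block count then follows from the standard identity expressing the number of Jordan blocks of $T$ corresponding to $\lambda$ of dimension exactly $d$ as the combination $2\dim\ker((\lambda-T)^{d})-\dim\ker((\lambda-T)^{d-1})-\dim\ker((\lambda-T)^{d+1})$, which by the previous paragraph is itself zero, even, or infinite. The ``in particular'' assertion for $\dim\ker(\lambda-T)$ is the case $d=1$ of the displayed dimension identity, while the statement for $\codim\rg(\lambda-T)$ follows via the duality $\codim\rg(\lambda-T)=\dim\ker(\lambda-T')$ (valid because $\lambda\in\R$), applied to the real operator $T'=(S')^{2}$ with real square root $S'$; note that $S'$ is real because $S$ is, as recorded in the complexification preliminaries in Section~\ref{sec:real}.

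The main anticipated obstacle is the Jordan-block bookkeeping in infinite dimensions when some $\dim\ker((\lambda-T)^{k})$ is infinite: the ``difference of dimensions'' formula for the block count is then not meaningful as a cardinal subtraction, and one has to argue separately that infinitude of some kernel dimension forces infinitely many Jordan blocks of some size, which absorbs that case into the ``zero, even, or infinite'' conclusion. By contrast, the Bezout decomposition and the conjugation bijection are purely algebraic and transfer verbatim to the infinite-dimensional setting, so no Banach-space input is needed beyond the reality of $S$ and the duality formula for $\codim\rg(\lambda-T)$.
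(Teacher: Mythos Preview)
Your Bezout decomposition $\ker((\lambda-T)^{d})=\ker((\mu-S)^{d})\oplus\ker((\mu+S)^{d})$ together with the conjugation swap is correct and is a genuinely different route from the paper's. The paper merely invokes Lemma~\ref{lem:SMT} to pass from one Jordan chain of $T$ at $\lambda$ to one of $S$ at $\pm i\alpha$, conjugates, and then writes ``the rest follows as in the proof of Theorem~\ref{thm:real}''; your explicit splitting makes the counting transparent and bypasses the case analysis of Lemma~\ref{lem:SMT} altogether. Both arguments share the conjugation trick and the passage to $T'$ for the dual statement.

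Where you take a wrong turn is your fourth paragraph. The paper's Definition of a (partial) Jordan block means that ``the number of Jordan blocks of dimension $d$'' is $\dim\bigl(\ker((\lambda-T)^{d})/\ker((\lambda-T)^{d-1})\bigr)$, not the exact-size count given by your second-difference formula; and your ``anticipated obstacle'' is a symptom of this detour, since the formula is indeed meaningless once some $\dim\ker((\lambda-T)^{k})$ is infinite (and your proposed workaround, that infinitely many blocks of \emph{some} size appear, does not control the count at a \emph{fixed} $d$). The fix is already in your hands: because $\ker((\mu\pm S)^{d-1})\subset\ker((\mu\pm S)^{d})$, your splitting is compatible with the filtration and descends to
\[
\frac{\ker((\lambda-T)^{d})}{\ker((\lambda-T)^{d-1})}\cong\frac{\ker((\mu-S)^{d})}{\ker((\mu-S)^{d-1})}\oplus\frac{\ker((\mu+S)^{d})}{\ker((\mu+S)^{d-1})},
\]
with conjugation still swapping the two summands. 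Hence the partial block count equals twice the first summand's dimension---even or infinite---with no separate infinite-dimensional case needed. A minor caveat: the identity $\codim\rg(\lambda-T)=\dim\ker(\lambda-T')$ you quote is not automatic when $\rg(\lambda-T)$ fails to be closed; the paper is equally brief here, simply rerunning the argument for $T'$ with real square root $S'$.
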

\begin{proof}
Assume first that $T$ has a $d$-dimensional Jordan block to $\lambda=-\alpha^2<0$ and a real square root $S$. By Lemma \ref{lem:SMT}, $S$ has a $d$-dimensional Jordan block to  $i\alpha$ or $-i\alpha$. The rest follows as in the proof of  Theorem \ref{thm:real}.

The analogous assertion for $T'$ follows from the fact that $T'$ is also real and that $S'$ is a real square root of $T'$ whenever $S$ is one of $T$. 
\end{proof}

\begin{ex}[The operators $V-I$ and $S-I$]
\begin{itemize}
\item[1)]
Consider the real operator $T:=V-I$ on $C[0,1]$, where  $V:C[0,1]\rightarrow C[0,1], (Vf)(x):=\int_0^xf(t)\;\mathrm{d}t$ denotes the Volterra operator. By $\sigma(T)=\{-1\}$, $T$ is clearly embeddable into a uniformly continuous (semi)group by  virtue of  Dunford's functional calculus. However, $\codim\rg (T+I)=1$ implying that $-1$ is an eigenvalue of $T'$ with geometric multiplicity $1$. Therefore, by Theorem \ref{thm:real-spectrum-nec}, $T$ does not have a real square root and is hence not real-embeddable.

\item[2)] 
Consider the infinite matrix
$$
\left(
\begin{matrix}
-1 &0 & 0 & \ldots\\
1 & -1 & 0 & \ldots\\
0 & 1 & -1 & \ldots\\
&  \ldots & \\
\end{matrix}
\right)
$$
which
represents the operator $S-I$ for the right shift $S$ on $c_0$ or any $\ell^p$ with $1\leq p<\infty$ with respect to the canonical basis. This operator also does not have a real square root and is therefore  not real-embeddable by the same reasons as the operator in 1). Note that the above matrix represents the operator from 1) with respect to the real basis $(e_n)$ with $e_n(t):=\frac{t^n}{n!}$, $n\in \N_0$, $t\in [0,1]$.
\end{itemize}
\end{ex}

The following example shows that the 
set $\C\setminus (-\infty,0]$ in 
Theorem \ref{thm:real-ln} (a) cannot be enlarged. 

\begin{ex}[Spectrum of a real operator which is not real-embeddable]\label{ex:real-spectrum-non-embed}
Let $K\subset \C$ be a compact conjugation invariant set satisfying $K\cap(-\infty,0]\neq \emptyset$. We construct a real operator $T$ with $\sigma(T)=K$ which is not real-embeddable.

Case 1: $0$ is not an isolated point of $K$.
Let $T_1$ be a real operator on $\ell^2$
 with $\sigma(T_1)=K$ which is constructed by the same procedure as in Example \ref{ex:real-spectrum-embed}. 
Take $\lambda\in K\cap (-\infty,0]$ and define $X:=\ell^2\oplus \C$ and $T:=T_1\oplus \lambda$ which is clearly a real operator. Then $\dim\ker(\lambda-T)\in\{1,3\}$ (depending on whether $\lambda\in\{a_n:\,n\in\N\}$ or not). So $T$ is not real-embeddable by Theorem \ref{thm:real-spectrum-nec} or Proposition \ref{prop:spectrum-zero}, respectively. 

Case 2: $0$ is an isolated point of $K$, i.e., $K=K_1\cup \{0\}$ for a compact set $K_1\subset\C\setminus\{0\}$. Let $T_1$ be a real-embeddable operator on $X_1=\ell^2\oplus \ell^2$
with $\sigma(T)=K_1$ from Example \ref{ex:real-spectrum-embed}. Then the operator $T:=T_1\oplus 0$ on $X:=X_1\oplus \C$ is not embeddable by Proposition  \ref{prop:spectrum-zero}.
\end{ex}

\section{Positive embeddability: All dimensions}\label{sec:pos-general}

 In the following we assume that $T$ is a finite matrix or an infinite matrix on any of the sequence spaces $c_0$, $\ell^p$, $1\leq p<\infty$. 

\subsection{Zero pattern of positive semigroups}
In this subsection we study the zero pattern of positive semigroups and then derive necessary conditions for positive embeddability.
We begin with the following observation.
\begin{remark}\label{rem:entries-gen}
A finite or infinite matrix is positive if and only if all its entries are $\geq 0$, and strong convergence implies convergence of the entries. Therefore the generator $A=(b_{ij})$ of a positive semigroup with bounded generator satisfies by the definition of the generator 
\begin{equation*}
b_{ij}\begin{cases}
\in \R,\quad &i=j,\\
\geq 0 \quad &\text{otherwise}.
\end{cases}
\end{equation*}
\end{remark}

\begin{remark}\label{rem:vera}
\begin{itemize}
\item[1)] A necessary spectral condition for embeddability into a \emph{bounded} positive semigroup is the following result \cite[Thm.~3.1]{keicher}, cf.~ \cite[Thm.~9]{davies-05} and  \cite[Thm.~2.2]{wolff}: If a  power bounded positive  matrix operator $T$ is positively embeddable, then the only possible eigenvalue of $T$ on the unit circle is $1$. 
\item[2)] An analogous result also holds for real embeddability, see  \cite[Cor.~2.8]{glueck}: Let $T$ be a real operator on $\ell^p$ with $p\in(1,\infty)$, $p\neq 2$. If $T$ is embeddable into a real \emph{contractive} semigroup, then 
the only possible eigenvalue of $T$ on the unit circle is $1$. 
\end{itemize}
\end{remark}

The following provides a simple necessary condition of embeddability, cf.~the finite-dimensional result  \cite[Lemma 4 and Cor.~2]{van-brunt}. 
Recall that the class of analytic semigroups includes  semigroups with bounded generator, see e.g.~\cite[Section II.4.a]{EN}.

\begin{thm}[Zero entries]\label{thm:zeros}
Let $T=(a_{ij})$ be a positive finite or infinite 
 matrix which is embeddable into a positive semigroup $T(\cdot)=(a_{ij}(\cdot))$. Then the following holds. 
\begin{enumerate}
 \item $a_{jj}(t)> 0$ holds for every $t\geq 0$ and $j$. 
\item If $a_{ij}=0$ for some $i\neq j$, then $a_{ij}(t)=0$ for every $t\leq 1$. Moreover, if 
$T(\cdot)$ is analytic,  
then $a_{ij}(t)=0$ for every $t\in [0,\infty)$.
\end{enumerate}
\end{thm}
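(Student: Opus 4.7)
The main tool will be the semigroup identity written out in matrix entries: since $T(t+s)=T(t)T(s)$ and all entries are nonnegative,
$$a_{ij}(t+s)=\sum_k a_{ik}(t)\,a_{kj}(s),$$
where the sum is a sum of nonnegative numbers (justified for $c_0$ and $\ell^p$ by $e_j$ being a Schauder basis and $T(t)$ being bounded). Strong continuity of $T(\cdot)$ implies continuity of each scalar function $t\mapsto a_{ij}(t)=\langle T(t)e_j,e_i^\ast\rangle$, with $a_{jj}(0)=1$ and $a_{ij}(0)=0$ for $i\neq j$. These are the only ingredients I expect to need.

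For part (a), I will take $i=j$ and keep only the diagonal term to get the super-multiplicative bound $a_{jj}(t+s)\geq a_{jj}(t)a_{jj}(s)$. By continuity and $a_{jj}(0)=1$ there is a $\delta>0$ with $a_{jj}(t)>0$ on $[0,\delta]$. Set $t_0:=\inf\{t>0:a_{jj}(t)=0\}$; if this set is nonempty, $t_0\geq\delta$ and $a_{jj}(t_0)=0$ by continuity. For any $0<t<\delta$, the identity $0=a_{jj}(t_0)=\sum_k a_{jk}(t)a_{kj}(t_0-t)$ forces every summand to vanish, in particular $a_{jj}(t)a_{jj}(t_0-t)=0$; since $a_{jj}(t)>0$, this yields $a_{jj}(t_0-t)=0$ with $t_0-t<t_0$, contradicting the definition of $t_0$.

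For part (b), suppose $a_{ij}(1)=0$ with $i\neq j$ and fix $t\in[0,1]$. Expanding
$$0=a_{ij}(1)=\sum_k a_{ik}(t)\,a_{kj}(1-t)\;\geq\;a_{ii}(t)\,a_{ij}(1-t)+a_{ij}(t)\,a_{jj}(1-t),$$
both retained terms must vanish; since $a_{ii}(t)>0$ and $a_{jj}(1-t)>0$ by part (a), I conclude $a_{ij}(t)=0$ and $a_{ij}(1-t)=0$. As $t$ ranges over $[0,1]$, this gives the first assertion of (b). For the analytic case, I will invoke the fact that an analytic semigroup extends holomorphically to a sector containing $(0,\infty)$, so $t\mapsto a_{ij}(t)$ is real-analytic on $(0,\infty)$; vanishing on $(0,1]$ then propagates to $(0,\infty)$ by the identity theorem, and $a_{ij}(0)=0$ completes the statement.

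The only delicate step is the infimum argument in (a); everything else is a direct consequence of nonnegativity of entries and the semigroup law. Once (a) is in hand, (b) reduces to reading off two summands in a sum of nonnegatives, and the analytic refinement is a one-line application of unique continuation.
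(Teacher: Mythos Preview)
Your proof is correct and follows essentially the same approach as the paper: both exploit the entrywise semigroup identity, nonnegativity, and strong continuity, and both invoke the identity theorem for the analytic refinement. The only minor difference is in part (a), where the paper uses a bisection argument (from $a_{jj}(t)=0$ deduce $a_{jj}(t/2)^2\leq a_{jj}(t)=0$, hence $a_{jj}(t/2^n)=0$ for all $n$, contradicting continuity at $0$) in place of your infimum argument; part (b) is handled identically, the paper simply keeping one of your two retained summands.
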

\begin{proof}
(a) Let $T(\cdot)=(a_{ij}(\cdot))$ be a positive finite or infinite matrix semigroup. Assume that $a_{jj}(t)=0$ for some $j$ and some $t>0$. By 
$$
0=a_{jj}(t)\geq a_{jj}(t/2)^2 \geq 0
$$
and induction we obtain $a_{jj}\left(\frac{t}{2^n}\right)=0$ for every $n\in\N$. This contradicts  the strong continuity of $T(\cdot)$ and $T(0)=I$.

(b) Assume that $T$ is embeddable into a positive semigroup $T(\cdot)$, $a_{ij}=0$ for some $i\neq j$ and let $t\in(0,1)$. Again by positivity we have 
$$
0=a_{ij}\geq a_{ij}(t) a_{jj}(1-t)\geq 0.
$$
Since by (a) $a_{jj}(1-t)>0$, we obtain $a_{ij}(t)=0$.

If in addition $T(\cdot)$ is analytic, then by the identity theorem 
the function $t\mapsto a_{ij}(t)$  equals zero on $[0,\infty)$.
\end{proof}

\begin{remark}
\begin{itemize}
\item[1)]
In view of Theorem \ref{thm:zeros} (b) it would be interesting to know whether there exists a positive $C_0$-semigroup $T(\cdot)$ of infinite matrices (necessarily not analytic) such that $T(1)_{ij}=0$ and $T(t)_{ij}\neq 0$ for some $t>1$ and some $i \neq j$.
\item[2)] Note that square roots of positive matrices need not have zero entries at the same places. Indeed, the matrix
$T:=\left(
\begin{matrix}
0 & 1\\
1 & 0
\end{matrix}
\right)
$
does not have zeros at the places where  $T^2=I$ has zeros. Note that this (or the failure of the condition (a) in Theorem \ref{thm:zeros}) implies that $T$ is not embeddable into a positive semigroup, although its square trivially is. Another example is the matrix $\left(
\begin{matrix}
1/2 & 1/2\\
1 & 0
\end{matrix}
\right)$ which is not positively embeddable by Theorem \ref{thm:zeros} (a), but its square is embeddable into a Markov semigroup, see \cite[Example 3.8]{BS}.
\end{itemize}
\end{remark}

The following easy consequence of Theorem \ref{thm:zeros}, a special case of \cite[Cor.~2.4]{wolff}, shows that the case of positive groups is rather simple. 
\begin{corollary}
Let $T$ be embeddable into a positive group. Then $T$ is a diagonal matrix with strictly positive diagonal.
\end{corollary}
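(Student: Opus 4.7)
The plan is to exploit the two-sided nature of a positive group: if $T$ embeds into a positive group $(T(t))_{t\in\R}$, then both $(T(t))_{t\geq 0}$ and its time-reversed companion $(T(-t))_{t\geq 0}$ are positive $C_0$-semigroups. In particular, the inverse $T^{-1}=T(-1)$ is itself positive and embeddable into a positive semigroup. So Theorem~\ref{thm:zeros}(a) applies twice and yields that every diagonal entry of both $T$ and $T^{-1}$ is strictly positive.

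Next I would use the matrix identity $TT^{-1}=I$ to rule out off-diagonal entries of $T$. Write $T=(a_{ij})$ and $T^{-1}=(c_{ij})$. For $i\neq j$ we have
\begin{equation*}
0 = (TT^{-1})_{ij}=\sum_{k} a_{ik}c_{kj}.
\end{equation*}
Since every summand is nonnegative, each summand must vanish. Taking the $k=j$ term gives $a_{ij}c_{jj}=0$, and since $c_{jj}>0$ by the first step, this forces $a_{ij}=0$. Thus $T$ is diagonal, and its diagonal entries are strictly positive by the first step, completing the argument.

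There is really no obstacle here; the only point to be a little careful about is to justify that $(T(-t))_{t\geq 0}$ is a positive $C_0$-semigroup embedding $T^{-1}$, so that Theorem~\ref{thm:zeros}(a) is legitimately applicable to the inverse. Strong continuity at $0$ transfers from $(T(t))_{t\in\R}$, the semigroup law is immediate, and positivity is built into the hypothesis on the group. Once this is noted, the conclusion follows in just a few lines.
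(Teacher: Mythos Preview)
Your proof is correct and follows essentially the same approach as the paper: both arguments use that $T(-1)$ is positive (as part of the positive group), invoke Theorem~\ref{thm:zeros}(a) to obtain strict positivity of the diagonal entries, and then read off from an off-diagonal entry of $T(-1)T=I$ (the paper) or $TT^{-1}=I$ (you) that each summand in the corresponding nonnegative sum must vanish, forcing the off-diagonal entries of $T$ to be zero. Your version is slightly more explicit about why Theorem~\ref{thm:zeros}(a) applies to the reversed semigroup, but the underlying idea is identical.
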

\begin{proof}
Assume that $T=(a_{ij})$ satisfies $a_{ij}>0$ for some $i\neq j$ and that $T$ is embeddable into a positive group $(T(t))_{t\in\R}$ with $T(t)=(a_{ij}(t))$. By the semigroup law we have 
\begin{equation}\label{eq:pos-gr}
\sum_{k} a_{ik}(-1)a_{kj}= (T(-1)T)_{ij}=0.
\end{equation}
By Theorem \ref{thm:zeros} (a) 
and the assumption we see that $a_{ii}(-1)a_{ij}>0$, a contradiction to (\ref{eq:pos-gr}) and the positivity of the group. 
\end{proof}

\begin{remark} 
If $T$ is a diagonal matrix with strictly positive diagonal $(a_{jj})$, then it is embeddable into the positive group $(T(t))_{t\geq 0}$ of diagonal matrices with diagonal $\left(e^{t \log a_{jj}}\right)$, $t\geq 0$. Its generator $A$ satisfies $Ae_j=(\log a_{jj})e_j$. Note that $A$ need not be bounded as the example with $a_{jj}=e^{- j}$ shows.   
\end{remark}

Note that for real embeddability into groups the situation  is very different, namely, every bounded real operator $A$ generates a real group by virtue of the representation $T(t)=\sum_{n=0}^\infty \frac{t^nA^n}{n!}$.

\begin{corollary}
Let $T$ be a positively embeddable infinite  matrix. Then none of the basis vectors $e_j$ belongs to the kernel of $T$. In particular, the operator $T=0$ is not positively embeddable.
\end{corollary}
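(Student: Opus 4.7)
The plan is to read off both statements directly from Theorem \ref{thm:zeros}(a). Write $T=(a_{ij})$ and suppose $T$ is embedded into a positive semigroup $T(\cdot)=(a_{ij}(\cdot))$ with $T(1)=T$. The $j$-th component of $Te_j$ is $a_{jj}(1)=a_{jj}$, and Theorem \ref{thm:zeros}(a) says precisely that $a_{jj}(t)>0$ for every $t\geq 0$ and every $j$. In particular $a_{jj}=a_{jj}(1)>0$, so $Te_j\neq 0$, i.e.\ $e_j\notin\ker T$.

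For the ``in particular'' clause, observe that the zero operator sends every basis vector $e_j$ to $0$, so it fails the condition just established. Hence $T=0$ cannot be positively embeddable.

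The argument is essentially a one-line application of part (a) of Theorem \ref{thm:zeros}, so there is no real obstacle; the only thing to be careful about is noting that positivity of the diagonal entry at time $t=1$ forces $Te_j$ to have a strictly positive $j$-th coordinate and therefore to be a nonzero vector.
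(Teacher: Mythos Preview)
Your proof is correct and is exactly the intended argument: the paper states this corollary without proof, as it follows immediately from Theorem~\ref{thm:zeros}(a) in precisely the way you describe.
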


\begin{remark}\label{rem:zero-op-real}
  Note that $T=0$ on $\ell^2$ is embeddable into a real semigroup. Indeed, $\ell^2$ is isomorphic to $L^2[0,1]$ via an isomorphism preserving real vectors (e.g.\ by choosing an  orthonormal basis on $L^2[0,1]$ consisting of real trigonometric functions). Finally, $0$ is clearly embeddable into the real nilpotent shift semigroup on $L^2[0,1]$.
However, by the above corollary, it is not positively embeddable. 
\end{remark}

We now present an  {\em invertible}  infinite matrix that is real-embeddable, has positive roots of all orders but is not positively embeddable. This example is based on \cite[p.~24]{kingman} where it is used in the context of Markov chains.
\begin{ex}
  For a bijection $\sigma: \N\rightarrow\Q$ define $T$ as the infinite matrix $(a_{ij})$
with
  \[a_{ij}=\left\{\begin{array}{ll}1,&\sigma(i)+1=\sigma(j), \\0, &\text{else}.\end{array}\right.\]
Then $T$ is an invertible isometry on $c_0, \ell^p, 1\leq p<\infty$, and is by construction isomorphic via a positive isomorphism to the shift by $1$ to the left on $c_0(\Q), \ell^p(\Q), 1\leq p<\infty$, respectively. 
      A positive $n$-th root of $T$ is given by $R_n=\left(r^{(n)}_{ij}\right)$ where 
      \[r^{(n)}_{ij}=\left\{\begin{array}{ll}1,&\sigma(i)+\frac{1}{n}=\sigma(j), \\0, &\text{else},\end{array}\right.\]
corresponding to the shift by $\frac1n$ to the left on $c_0(\Q), \ell^p(\Q), 1\leq p<\infty$, respectively.
Since $a_{ii}=0$ for all $i\in\mathbb{N}$, $T$ is not positively embeddable by Theorem \ref{thm:zeros} (b).   We now show that $T$ is embeddable into a real $C_0$-group on $\ell^2$. 
We first construct a bijection $\tau:\Q\to\Z^2$ as follows. First define $\tau:\Q\cap[0,1)\to\{0\}\times\Z$ to be any bijection and then extend it to $\Q$ using the rule 
$$
\tau(q+1)=\tau(q)+(1,0) \quad \forall q\in\Q. 
$$
In this way, $T$ is real-isomorphic to the left shift operator $L$ on $\ell^2(\Z^2)$ given by 
$$
 L(e_{j+1}(n)):=e_{j}(n),\quad j,n\in\Z,
$$
where $(e_j(n))_{j,n\in\Z}$ is the canonical basis of $\ell^2(\Z^2)$.
By Example \ref{ex:shift} below, $L$ (and hence $T$) is embeddable into a real $C_0$-group.
\end{ex}

\begin{corollary}\label{cor:triangular}
If a positive (finite or infinite) matrix is upper triangular and positively embeddable into a $C_0$-semigroup, then each operator in this semigroup is  necessarily upper triangular. 
\end{corollary}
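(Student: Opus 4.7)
The plan is to reduce to the already-handled case $t \in [0,1]$ and then bootstrap to all $t \geq 0$ via the semigroup law. Let $T = (a_{ij})$ be the upper triangular positive matrix in question, so $a_{ij} = 0$ whenever $i > j$, and let $T(\cdot) = (a_{ij}(\cdot))$ be a positive $C_0$-semigroup with $T(1) = T$. Applying Theorem \ref{thm:zeros}(b) to each pair $(i,j)$ with $i > j$ immediately yields $a_{ij}(t) = 0$ for every $t \in [0,1]$; in other words, $T(s)$ is upper triangular for every $s \in [0,1]$.

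Next, I would record the elementary observation that the product of two bounded upper triangular operators $A, B$ (with respect to the canonical basis of $c_0$ or $\ell^p$) is again upper triangular. Indeed, upper triangularity of $B$ gives $B e_j \in \lin\{e_1, \ldots, e_j\}$, and since $A e_k \in \lin\{e_1, \ldots, e_k\} \subset \lin\{e_1, \ldots, e_j\}$ for every $k \leq j$, one has $AB e_j \in \lin\{e_1, \ldots, e_j\}$, so $(AB)_{ij} = 0$ whenever $i > j$. In particular, $T^n$ is upper triangular for every $n \in \N$.

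To finish, for arbitrary $t \geq 0$ write $t = n + s$ with $n$ a nonnegative integer and $s \in [0,1)$. The semigroup law gives $T(t) = T(s)\,T(1)^n = T(s)\,T^n$, a product of two upper triangular operators, hence upper triangular. There is no serious obstacle in the argument; the only point that requires a moment of care is the matrix product in the infinite-dimensional setting, but this is harmless because each basis vector $e_j$ is moved inside the finite-dimensional subspace $\lin\{e_1, \ldots, e_j\}$ at every step, so no convergence issues arise.
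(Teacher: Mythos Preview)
Your proof is correct and follows essentially the same approach as the paper: invoke Theorem~\ref{thm:zeros}(b) to get upper triangularity of $T(t)$ for $t\in[0,1]$, then use closure of upper triangular matrices under multiplication together with the semigroup law to reach all $t\ge 0$. The paper states this in a single sentence; you have simply spelled out the decomposition $t=n+s$ and the product argument explicitly.
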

The proof follows immediately from Theorem \ref{thm:zeros} (b) and the fact that the product of two upper triangular matrices is again upper triangular. Note that in this case the generator is also upper triangular whenever it is bounded.

Analogously, we obtain the following. In our setting, a positive (finite or infinite) matrix $T$ is reducible if and only if there exists a family $(e_j)_{j\in J}$ of basis vectors such that the subspace $Y:=\ol{\lin}\{e_j,j\in J\}$ is nontrivial and satisfies $TY\subset Y$. 

\begin{corollary}\label{cor:red}
Let $T(\cdot)$ be a positive (finite or infinite-dimensional) matrix semigroup. If $T(1)$ is reducible, then for every $t>0$ the matrix $T(t)$  is reducible with the same reducing subspaces. 
\end{corollary}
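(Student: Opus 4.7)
The plan is to translate reducibility into a statement about zero entries of the matrix and then appeal to Theorem \ref{thm:zeros} (b) together with the semigroup law. Let $Y = \ol{\lin}\{e_j : j \in J\}$ be a reducing subspace for $T(1)$, where $J$ is a proper nonempty subset of the index set. The first step is to observe that, since $T(1)$ is a positive matrix written in the standard basis, the inclusion $T(1)Y \subset Y$ is equivalent to the entry condition $a_{ij}(1) = 0$ for every $i \notin J$ and every $j \in J$: indeed, the column $T(1)e_j$ lies in $Y$ precisely when all of its coordinates outside $J$ vanish.

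For each such pair $(i,j)$ one has $i \neq j$, so I would directly invoke Theorem \ref{thm:zeros} (b) to upgrade $a_{ij}(1) = 0$ to $a_{ij}(t) = 0$ for every $t \in (0,1]$. This already yields $T(t)Y \subset Y$ on the whole interval $(0,1]$. To bootstrap this to arbitrary $t > 0$, I would choose $n \in \N$ with $t/n \leq 1$ and use the semigroup identity $T(t) = T(t/n)^n$: each factor leaves $Y$ invariant by the previous step, hence so does the product. This establishes $T(t)Y \subset Y$ for every $t > 0$, proving that every $T(t)$ is reducible with the same reducing subspaces.

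I do not anticipate any serious obstacle, as the essential ingredient (propagation of zero entries from time $1$ to all of $(0,1]$) is already contained in Theorem \ref{thm:zeros} (b) and applies verbatim. It is worth noting that no analyticity hypothesis on $T(\cdot)$ is required, since the argument only uses the semigroup law and the $(0,1]$-window supplied by that theorem.
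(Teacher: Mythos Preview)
Your argument is correct and follows essentially the same line as the paper: the paper merely says the corollary is obtained ``analogously'' to Corollary~\ref{cor:triangular}, whose proof invokes Theorem~\ref{thm:zeros}~(b) for $t\le 1$ and then the closure of the relevant matrix class under products (i.e., the semigroup law) for larger $t$. You have spelled out exactly this, together with the translation of the invariance $T(1)Y\subset Y$ into the entrywise condition $a_{ij}(1)=0$ for $i\notin J$, $j\in J$.
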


\begin{corollary}
Let $T$ be a block matrix with square blocks on the diagonal. Then every positive semigroup into which $T$ can be embedded is of the same form, as well as the generator whenever it is bounded. In particular, each block of $T$ is embeddable into a positive semigroup.
\end{corollary}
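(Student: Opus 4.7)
The plan is to apply Theorem~\ref{thm:zeros}(b) to the off-block-diagonal zeros of $T$ and then propagate the resulting block diagonal structure through the semigroup law. Let $J_1,J_2,\ldots$ be the disjoint index sets labeling the diagonal blocks of $T$; then $a_{ij}=0$ whenever $i\in J_k$ and $j\in J_\ell$ with $k\neq\ell$, and in particular every such pair has $i\neq j$.

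For every such off-block pair, Theorem~\ref{thm:zeros}(b) yields $a_{ij}(t)=0$ for every $t\in[0,1]$. Hence each $T(t)$ with $t\in[0,1]$ is block diagonal with the same block pattern as $T$. For $t>1$, write $t=n+s$ with $n\in\N$ and $s\in[0,1)$ and observe that $T(t)=T^nT(s)$ is a product of block diagonal matrices with that common pattern, and such a product is itself block diagonal with the same pattern. This proves the first claim.

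When the generator $A$ is bounded, the semigroup is uniformly continuous and $A=\lim_{h\to 0^+}h^{-1}(T(h)-I)$ in the operator norm, which entails entrywise convergence (cf.\ Remark~\ref{rem:entries-gen}). Each approximant $h^{-1}(T(h)-I)$ with $h\in(0,1]$ vanishes at every off-block position because both $T(h)$ and $I$ do; the same therefore holds for $A$. The final assertion that each block of $T$ is positively embeddable then follows by restricting $T(\cdot)$ to each invariant coordinate subspace $\overline{\lin}\{e_j:j\in J_k\}$, which yields a positive $C_0$-semigroup whose time-$1$ operator is the $k$-th diagonal block of $T$.

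I do not expect a substantial obstacle here: the statement is a direct application of Theorem~\ref{thm:zeros}(b) combined with the semigroup law, the only mildly subtle point being that the extension of the zero pattern from $[0,1]$ to $[0,\infty)$ cannot invoke analyticity (the semigroup is not assumed analytic) but is handled by the factorisation $T(t)=T^nT(s)$.
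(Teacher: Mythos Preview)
Your proof is correct and follows precisely the route the paper intends: the corollary is stated in the paper without proof, but it is placed immediately after Corollary~\ref{cor:triangular}, whose one-line justification (``follows immediately from Theorem~\ref{thm:zeros}(b) and the fact that the product of two upper triangular matrices is again upper triangular'') is exactly the template you implement for the block-diagonal pattern. Your explicit handling of $t>1$ via $T(t)=T^nT(s)$ and of the bounded generator via the difference quotient makes the argument self-contained.
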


\begin{corollary}\label{cor:id-pos}
The (finite or infinite) identity matrix $I$ is uniquely positively embeddable into the group given by $T(t)=I$.
\end{corollary}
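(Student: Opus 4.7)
The plan is to show that any positive $C_0$-semigroup $(T(t))_{t\geq 0}$ with $T(1)=I$ must be the constant semigroup $T(t)=I$, from which the extension to the trivial group is immediate. The key tool is Theorem \ref{thm:zeros}, which tightly controls the zero pattern and the positivity of diagonal entries of any positive embedding.

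First, since every off-diagonal entry of $I$ vanishes, Theorem \ref{thm:zeros}(b) applied with $a_{ij}=0$ for all $i\neq j$ yields $(T(t))_{ij}=0$ for all $i\neq j$ and all $t\in[0,1]$. Hence $T(t)$ is diagonal for every $t\in[0,1]$. Write $d_j(t):=(T(t))_{jj}$; by Theorem \ref{thm:zeros}(a) we have $d_j(t)>0$ for every $t\geq 0$.

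Because $T(t)$ and $T(s)$ are diagonal for $t,s\in[0,1]$ with $t+s\leq 1$, the semigroup law reduces on the diagonal to the scalar equation $d_j(t+s)=d_j(t)\,d_j(s)$. Strong continuity of $T(\cdot)$ makes each $d_j$ continuous, so $d_j(t)=e^{c_j t}$ on $[0,1]$ for some $c_j\in\R$. The normalisation $d_j(1)=(T(1))_{jj}=I_{jj}=1$ forces $c_j=0$, and therefore $d_j(t)=1$ for $t\in[0,1]$. Combined with the off-diagonal vanishing from the first step, this gives $T(t)=I$ for every $t\in[0,1]$.

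Finally, for arbitrary $t\geq 0$ write $t=n+s$ with $n\in\N_0$ and $s\in[0,1)$; then $T(t)=T(1)^n T(s)=I^n\cdot I=I$. Thus the only positive $C_0$-semigroup embedding $I$ is the constant one, which is trivially a group. There is no real obstacle here beyond invoking the right piece of Theorem \ref{thm:zeros}; the only mild point to note is that the reduction of the semigroup law to the scalar equation $d_j(t+s)=d_j(t)d_j(s)$ requires $T(t),T(s)$ to already be diagonal, which is exactly what Theorem \ref{thm:zeros}(b) supplies on $[0,1]$.
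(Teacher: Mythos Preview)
Your proof is correct and follows essentially the approach the paper intends: the corollary is stated without proof, as an immediate consequence of Theorem~\ref{thm:zeros} (or equivalently of the preceding block/triangular corollaries, which force any positive embedding of $I$ to be diagonal). Your explicit reduction to the scalar Cauchy equation $d_j(t+s)=d_j(t)d_j(s)$ on $[0,1]$ and the use of continuity to identify $d_j\equiv 1$ simply spell out the details the paper leaves implicit.
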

\begin{remark}
The identity is however embeddable into many real semigroups. Indeed, for instance every rotation on $\R^d$, $d\geq 2$ leads to a (finite-dimensional) real rotation semigroup  into which the identity can be embedded (after rescaling if necessary), and it is easy to construct from such blocks many infinite matrix semigroups into which the identity can be embedded. 
\end{remark}

\subsection{Embeddability into positive analytic semigroups}
Based on the previous subsection we give a necessary condition for embeddability of an operator 
into a positive analytic semigroup and describe the zero pattern of the semigroup. 
The following result can be found in a more general form in \cite[Thm.~C.III.3.2(b)]{book-pos}, see also
\cite[Exercise 7.7.2]{BKR} for its finite-dimensional version. 
We present a direct proof for the reader's convenience.

\begin{thm}\label{thm:strict-pos}
Let a (finite or infinite) matrix $T$ be embeddable into a positive analytic semigroup $T(\cdot)$. 
Then $T$ is either strictly positive or reducible. Moreover, exactly one of the following assertions holds.
\begin{enumerate}  
\item $T(t)$ is strictly positive for every $t>0$. 
\item $T(t)$ is reducible for every $t>0$ with the same reducing subspaces as $T$. 
\end{enumerate}  
\end{thm}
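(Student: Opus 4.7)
The plan is to use Theorem~\ref{thm:zeros} together with analyticity of $T(\cdot)$ to show that the off-diagonal nonzero pattern of $T(t)$ is the \emph{same} set of index pairs for every $t>0$, and then to convert this pattern, via the semigroup law, into a transitive relation on the index set from which reducibility (or strict positivity) will follow by a purely combinatorial argument.

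First I would establish a time-uniform dichotomy: for each pair $i\ne j$, the function $t\mapsto T(t)_{ij}$ is either identically zero on $[0,\infty)$ or strictly positive on $(0,\infty)$. Applying Theorem~\ref{thm:zeros}(b) to the shifted semigroup $(T(s+\cdot))_{s\geq 0}$ shows that if $T(t_0)_{ij}=0$ for some $t_0>0$, then $T(t)_{ij}=0$ on $[0,t_0]$; the identity theorem for analytic functions then propagates this zero to all of $(0,\infty)$. Since by Theorem~\ref{thm:zeros}(a) the diagonal entries are strictly positive, the set
$$
E:=\{(i,j):T(1)_{ij}>0\}
$$
coincides with $\{(i,j):T(t)_{ij}>0\}$ for every $t>0$.

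Next, the semigroup identity $T(2)_{ij}=\sum_k T_{ik}T_{kj}$ together with nonnegativity of entries implies that $(i,j)\in E$ iff there exists $k$ with $(i,k),(k,j)\in E$, so $E$ is a reflexive and transitive relation on the index set. If $E$ contains every pair, then $T(t)$ has all entries strictly positive for every $t>0$, giving case~(a). Otherwise, pick $(i_0,j_0)\notin E$ with $i_0\ne j_0$ and set
$$
J:=\{k:(k,j_0)\in E\},\qquad Y:=\ol{\lin}\{e_k:k\in J\}.
$$
Reflexivity gives $j_0\in J$ while the choice of $(i_0,j_0)$ gives $i_0\notin J$, so $J$ is nonempty and proper; continuity of the coordinate functionals on the underlying sequence space makes $Y$ a nontrivial proper closed subspace. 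By transitivity, $T_{il}=0$ whenever $l\in J$ and $i\notin J$ (otherwise $(i,l),(l,j_0)\in E$ would force $(i,j_0)\in E$), which translates to $TY\subset Y$. Since $E$ is also the nonzero pattern of $T(t)$ for every $t>0$, the same $J$ yields $T(t)Y\subset Y$ for all $t>0$, giving case~(b). The two cases are mutually exclusive, since strict positivity of a single $T(t)$ forbids any proper $T(t)$-invariant coordinate subspace.

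The main obstacle is Step~1: without time-uniformity of the nonzero pattern, the combinatorial argument of Step~2 cannot be applied simultaneously for all $t$, and one can imagine pathological non-analytic semigroups where zeros appear and disappear in time. Once Step~1 is in place the rest is essentially a reachability argument for the relation $E$, and it goes through uniformly for finite matrices and for infinite matrices on $c_0$ and $\ell^p$.
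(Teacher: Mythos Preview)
Your proof is correct and self-contained. The paper takes a different route: rather than building a reducing subspace directly from the nonzero pattern $E$, it argues by contradiction using an external structural fact about irreducible positive operators. Assuming $T$ is irreducible with some $a_{ij}=0$, the paper invokes Theorem~\ref{thm:zeros}(b) (analytic case) to get $a_{ij}(t)=0$ for all $t\geq 0$, and then cites \cite[Prop.~III.8.3]{schaefer} (for an irreducible positive operator and any pair $i,j$ one has $(T^k)_{ij}>0$ for some $k$) to reach a contradiction; case~(b) is then handled by Corollary~\ref{cor:red}. Your reachability/transitivity argument for $E$ is essentially an in-situ proof of the cited Schaefer fact, and it yields a bit more: you show that all $T(t)$, $t>0$, share the \emph{same} nonzero pattern, not merely the same coordinate-invariant subspaces. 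The paper's version is shorter by outsourcing to the reference; yours is more elementary and uniform across the finite and infinite cases.

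One small wording point in Step~1: the family $(T(s+\cdot))_{\cdot\geq 0}$ is not a semigroup with $T(s+0)=I$, so Theorem~\ref{thm:zeros}(b) does not apply to it verbatim. Either rescale time (set $S(t):=T(t_0 t)$, so $S(1)=T(t_0)$, and apply Theorem~\ref{thm:zeros}(b) to $S(\cdot)$), or reproduce the one-line inequality from its proof directly: if $T(t_0)_{ij}=0$ and $0<t<t_0$, then $0=T(t_0)_{ij}\geq T(t)_{ij}\,T(t_0-t)_{jj}$ together with Theorem~\ref{thm:zeros}(a) forces $T(t)_{ij}=0$. Either way your conclusion stands.
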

\begin{proof}
By Theorem \ref{thm:zeros} (a), the diagonal entries of $T(t)$ are strictly positive for every $t\geq 0$. 
If $T$ is irreducible and $a_{ij}=0$ for some $i\neq j$, 
then $a_{ij}(t)=0$ for every $t\in[0,\infty)$ by Theorem \ref{thm:zeros} (b). 
But by \cite[Prop.\ III.8.3]{schaefer}, there exists $k\in\N$ such that $a_{ij}(k)\neq 0$ which is a contradiction. 
So $T$ is strictly positive. In this case, every $T(t)$, $t>0$, is strictly positive by rescaling and Theorem \ref{thm:zeros} (b), hence (a) holds.

If $T$ is reducible, then we can apply Corollary~\ref{cor:red} to show assertion (b). 
\end{proof}

\begin{remark}
Note that the bounded generator of a strictly positive semigroup does not have to be strictly positive outside of the diagonal as the example 
$
A=\left(
\begin{matrix}
0 & 1 & 0\\
0 & 0 & 1\\
1 & 0 & 0
\end{matrix}
\right)
$ 
shows. (Note that $A$ and the generated semigroup are circulant. For results on this class of matrices in the context of Markov embeddings see \cite[Section 5]{BS}.)
\end{remark}

The converse implication fails as the  following examples show. More examples will be provided in Section \ref{sec:2x2}.

\begin{ex}[Jordan blocks are  positively embeddable if and only if $d\leq 2$]\label{ex:pos-Jordan}

Let $J$ be a positive $d$-dimensional Jordan block corresponding to $\lambda>0$.
Note that $J$ is reducible 
for $d\geq 2$ and embeddable into a real semigroup, see  
Example \ref{ex:jordan-real}.  For $d\in\{1,2\}$ $J$ is positively embeddable into 
 $T(\cdot)$ with
$T(t)=\lambda^t$ and $T(t)=\lambda^{t}\left(\begin{matrix}
1 & \frac{t}{\lambda} \\
0 & 1 
\end{matrix}\right)$, respectively.
We show that for $d\geq 3$, $J$ 
is not positively embeddable. 

Let $d\geq 3$, assume that $J$ is positively embeddable into $T(\cdot)$ and consider $S:=T(1/2)=(b_{ij})$. By $(S^2)_{13}=J_{13}=0$ we have $b_{12}b_{23}=0$. Since, by Theorem \ref{thm:zeros}, $S$ has zero entries at the places where $J$ has zero entries, we have  either $(S^2)_{12}=\sum_{j=1}^d b_{1j}b_{j2}=0$ or $(S^2)_{23}=0$, both contradicting $S^2=J$.
\end{ex}
\begin{ex}[Infinite-dimensional Jordan block]
Also the infinite-dimensional Jordan block
$$
J=\left(\begin{matrix}
1 & 1 & 0 & 0 &\ldots\\
0 & 1  & 1 & 0 &\ldots\\
0 & 0 & 1 & 1 & \ldots\\
\  &\ & \ldots
\end{matrix}\right)
$$
 is not positively embeddable. Indeed, assume it is embeddable into a positive semigroup $T(\cdot)$. By Corollary \ref{cor:red}, the $3$-dimensional Jordan block would be embeddable into a positive semigroup, a contradiction to the previous example.
\end{ex}

\section{$2\times 2$ case}\label{sec:2x2}

The following provides a full description of positive embeddability in dimension $2$. 
An analogous result for Markov embeddability can be found in \cite{BS} while the equivalence of (ii) and (iii) is stated in \cite[Thm.~3.1]{tam-huang}.

\begin{thm}\label{thm:2x2}
  Let $T\in\R^{2\times 2}$ be positive.
 Then the following assertions are equivalent.
\begin{enumerate}
\item[(i)] $T$ is embeddable into a positive semigroup.
\item[(ii)] $T$ is invertible and has a positive square root.
\item[(iii)] $\sigma(T)\subseteq (0,\infty)$. 
\item[(iv)] $\det T>0$.
\end{enumerate}
\end{thm}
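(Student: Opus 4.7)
My plan is to establish the cycle (i) $\Rightarrow$ (ii) $\Rightarrow$ (iii) $\Rightarrow$ (iv) $\Rightarrow$ (i); only the last implication is substantial.

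For (i) $\Rightarrow$ (ii), the operator $S := T(1/2)$ is a positive square root, and invertibility follows from Proposition~\ref{prop:spectrum-zero} since $\dim \ker T$ and $\codim \rg T$ cannot equal $\infty$ in dimension two. For (ii) $\Rightarrow$ (iii), I would observe that the characteristic polynomial of any positive $2\times 2$ matrix $S = \bigl(\begin{smallmatrix} a & b \\ c & d\end{smallmatrix}\bigr)$ has discriminant $(a-d)^2 + 4bc \geq 0$, so $\sigma(S) \subset \R$, hence $\sigma(T) = \sigma(S)^2 \subset [0,\infty)$, with $0$ excluded by invertibility. For (iii) $\Rightarrow$ (iv), $\det T$ is the product of the two positive eigenvalues.

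The substantive step is (iv) $\Rightarrow$ (i). I would first note that a positive $T = \bigl(\begin{smallmatrix} a & b \\ c & d\end{smallmatrix}\bigr)$ with $\det T > 0$ automatically satisfies (iii): the eigenvalues are real by the discriminant above and have product $\det T > 0$, while the sum $a+d$ cannot vanish, since $a+d = 0$ with $a,d\geq 0$ forces $a=d=0$ and then $\det T = -bc \leq 0$. So both eigenvalues lie in $(0,\infty)$. The plan is then to produce an explicit logarithm $A$ of $T$ which is a Metzler matrix -- that is, has non-negative off-diagonal entries -- so that $A + cI \geq 0$ for $c$ large enough and
$$T(t) := e^{tA} = e^{-ct}\,e^{t(A+cI)}$$
defines a positive $C_0$-semigroup with $T(1) = T$.

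For the construction I would distinguish two cases. When the eigenvalues $\lambda_1 \neq \lambda_2$ are distinct, I would set $A := \alpha (T - \lambda_2 I) + (\log \lambda_2) I$ with $\alpha := (\log \lambda_1 - \log \lambda_2)/(\lambda_1 - \lambda_2) > 0$ (positivity of $\alpha$ by strict monotonicity of $\log$); its off-diagonal entries are $\alpha b$ and $\alpha c$, hence non-negative, and a check on eigenvectors shows $e^A = T$. When the two eigenvalues coincide at some $\lambda > 0$, the discriminant forces $a = d = \lambda$ and $bc = 0$, so $T = \lambda I + N$ with $N^2 = 0$; then $A := (\log \lambda) I + N/\lambda$ has the required form, and the commutativity of $I$ and $N$ gives $e^A = \lambda I \cdot (I + N/\lambda) = T$. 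The main obstacle is precisely this sign check on the off-diagonals of $\log T$; everything else is routine spectral and semigroup bookkeeping.
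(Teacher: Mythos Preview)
Your proof is correct. The easy implications are handled essentially as in the paper; your discriminant argument for (ii)$\Rightarrow$(iii) (applied to the positive square root $S$) is a clean self-contained alternative to the paper's appeal to $\det T=(\det S)^2>0$.

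The substantive difference lies in the construction for (iii)$\Rightarrow$(i). The paper argues via Perron--Frobenius theory: in the case of two distinct eigenvalues $0<\mu<\lambda$ it chooses a positive eigenvector $u$ for $\lambda$ and an eigenvector $v$ for $\mu$ whose entries have opposite signs, defines $T(t)$ on $u,v$, and then verifies positivity of $T(t)$ by an explicit computation of $T(t)e_1$ and $T(t)e_2$; the repeated-eigenvalue case is treated separately by showing $T$ must be triangular. Your route instead writes down the logarithm as the Lagrange interpolant $A=\alpha(T-\lambda_2 I)+(\log\lambda_2)I$ with $\alpha=(\log\lambda_1-\log\lambda_2)/(\lambda_1-\lambda_2)>0$ (respectively $A=(\log\lambda)I+N/\lambda$ in the nilpotent case) and observes that the off-diagonal entries of $A$ are non-negative multiples of those of $T$, so $A$ is Metzler and $e^{tA}\geq 0$ is immediate. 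This is more streamlined: it bypasses the eigenvector sign analysis and the entrywise positivity check, and it explains structurally why positivity holds (the generator is a positive scalar multiple of $T$ shifted by a multiple of the identity). The paper's argument, in exchange, is entirely elementary and does not invoke the Metzler characterisation of generators of positive matrix semigroups.
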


\begin{proof}
We begin with an observation. 
Since $T$ is positive, the spectral radius $\radius(T)\geq 0$ is an eigenvalue. 
  If $\lambda$ is an eigenvalue of a real matrix, then so is $\overline{\lambda}$. Hence, a positive $2\times 2$ matrix has only real eigenvalues, one of which is $r(T)\geq 0$.

We now proceed with the proof of the assertion. Clearly, (i) implies (ii).

(ii)$\Rightarrow$(iii)
Let $\lambda\in\R\setminus\{0\}$ be the second eigenvalue of $T$.
Since $T$ has in particular a real square root, $\lambda>0$ by the discussion at the beginning of Subsection \ref{subsec:real-finite} and the observation above.

(iii)$\Leftrightarrow$(iv) follows from the observation and the fact that the determinant of $T$ equals the product of the eigenvalues.

(iii)$\Rightarrow$(i)
  Case 1: $T$ has two distinct eigenvalues $0<\mu<\lambda$.
  
  By the theory of positive matrices there is a positive eigenvector $u:=(x,y)^T>0$ corresponding to the eigenvalue $\lambda$.
  Without loss of generality let $x>0$.
  Since only eigenvectors corresponding to $\lambda$ can be positive, every real eigenvector pertaining to $\mu$ has two non-zero entries with different signs.
  So there exists $z>0$ such that $v:=(x,-z)^T$ is an eigenvector pertaining to $\mu$.
  For any $t\geq 0$ we define 
  \[T(t)u:=e^{t\log(\lambda)}u \quad\text{ and }\quad T(t)v:=e^{t\log(\mu)}v.\]
  Clearly, $(T(t))_{t\geq 0}$ is a semigroup. 
  Moreover, we have 
  \[\begin{pmatrix}1\\0\end{pmatrix}=\frac{z}{x(y+z)}u+\frac{y}{x(y+z)}v \quad \text{ and } \quad \begin{pmatrix}0\\1\end{pmatrix}=\frac{1}{y+z}u-\frac{1}{y+z}v. \]
  Hence, we obtain
  \[x(y+z)T(t)\begin{pmatrix}1\\0\end{pmatrix}=\begin{pmatrix}x(ze^{t\log(\lambda)}+ye^{t\log(\mu)})\\zy(e^{t\log(\lambda)}-e^{t\log(\mu)})\end{pmatrix}\geq0\]
  and
  \[(y+z)T(t)\begin{pmatrix}0\\1\end{pmatrix}=\begin{pmatrix}x(e^{t\log(\lambda)}-e^{t\log(\mu)})\\ye^{t\log(\lambda)}+ze^{t\log(\mu)}\end{pmatrix}\geq 0\]
  showing that $T(t)\geq 0$ for all $t\geq 0$. 

  Case 2: $\sigma(T)=\{\radius(T)\}$, $T$ is diagonalisable. 

  Since $T$ is diagonalisable, there exists an invertible matrix $S\in\R^{2\times 2}$ such that  $T=S^{-1}\begin{pmatrix}\radius(T) & 0 \\ 0 & r(T) \end{pmatrix}S$ showing that $T=\radius(T)\Id$. 
  Clearly, $T$ is embeddable into the positive semigroup $(T(t))_{t\geq 0}$ 
given by $T(t)=e^{t\log(\radius(T))}\Id$.

Case 3: $\sigma(T)=\{\radius(T)\}$, $T$ is not diagonalisable.

  We claim that in this case $T$ is a triangular matrix. In fact, if all entries of $T$ were greater than $0$, then $T$ would be irreducible and $r:=\radius(T)$ would be a simple root of the characteristic equation, see  \cite[Prop.~I.6.3]{schaefer} in contradiction to our assumption that $\sigma(T)=\{r\}$. If we assumed that a diagonal entry of $T$ was $0$, e.g.~$T=\begin{pmatrix}a & b \\ c & 0\end{pmatrix}$, $a, b, c\geq 0$, then $\det T=-bc\leq 0$ contradicting the already proven equivalence (iii)$\Leftrightarrow$(iv).

    So let us assume without loss of generality that $T=\begin{pmatrix}r & b \\ 0 &r \end{pmatrix}$.
    Then the semigroup $(T(t))_{t\geq 0}$ with 
    \[T(t)=\begin{pmatrix}e^{t\log (r)} & t\frac{b}{r}e^{t\log(r)}\\ 0 & e^{t\log(r)}\end{pmatrix}\]
    is positive and $T(1)=T$. 
  
\end{proof}

\begin{remark}
  Analysing the proof of Theorem~\ref{thm:2x2} (and using Corollary \ref{cor:id-pos}) we see that the embedding into a positive 
 semigroup is unique. This is consistent with the uniqueness for $2\times 2$ matrices in the context of Markov embeddings  \cite[Cor.~3.2]{BS}.
Moreover, if $T$ is a  $2\times 2$ Markov matrix then each of the conditions in Theorem~\ref{thm:2x2} is equivalent to $T$ being embeddable into a Markov semigroup \cite[Thm.~3.1]{BS}.
\end{remark}

\begin{remark}
By Theorem \ref{thm:real} and Proposition \ref{prop:spectrum-zero}, a matrix $T\in\R^{2\times 2}$ is real-embeddable if and only if it is invertible and has no negative eigenvalue with geometric multiplicity $1$. Thus, for positive $2\times 2$ matrices real-embeddability coincides with positive embeddability by Theorem \ref{thm:2x2} since $r(T)\in\sigma(T)$ for positive $T$.  
\end{remark}

\begin{remark}
The equivalence (i)$\Leftrightarrow$(iv) in Theorem \ref{thm:2x2} (or Theorem \ref{thm:strict-pos} for finite matrices) shows in particular that a positive additive perturbation of a positively embeddable matrix does not have to be positively embeddable. 
\end{remark}

The following example shows that the implications (ii)$\Rightarrow$(i), (iii)$\Rightarrow$(ii), and (iv)$\Rightarrow$(ii)  in Theorem \ref{thm:2x2} fail for $3\times 3$ matrices. 
\begin{ex} A positive matrix
$$
T=
\left(\begin{matrix}
1 & a &c \\
0 & 1 & b\\
0 & 0 & 1
\end{matrix}\right)
$$
is positively embeddable if and only if $c\geq \frac{ab}2$, whereas $T$ has a positive square root whenever $c\geq \frac{ab}4$. Indeed, if $T$ is positively embeddable, then  by Corollary \ref{cor:triangular} the generator is  necessarily of the form $A=\left(\begin{matrix}
0 & \alpha &\gamma \\
0 & 0 & \beta \\
0 & 0 & 0
\end{matrix}\right)$ for $\alpha,\beta,\gamma\geq 0$, and every such $A$ generates the positive (semi)group given by 
$$
T(t)=
\left(\begin{matrix}
1 & t\alpha &t\gamma+\frac{t^2}2\alpha\beta \\
0 & 1 & t\beta\\
0 & 0 & 1
\end{matrix}\right).
$$
Taking $t=1$ and comparing the entries implies the first assertion. For the second assertion, let  
$$
S=\left(\begin{matrix}
1 & \alpha &\gamma \\
0 & 1 & \beta \\
0 & 0 & 1
\end{matrix}\right)
$$
be a positive matrix.  Since $S^2=\left(\begin{matrix}
1 & 2\alpha &2\gamma + \alpha\beta\\
0 & 1& 2\beta \\
0 & 0 & 1
\end{matrix}\right)$, we see that $S^2=T$ holds if and only if $\alpha=\frac{a}2$, $\beta=\frac{b}2$, $\gamma=\frac{c-ab/4}2$. Therefore $T$ has a positive triangular square root  if and only if $c\geq \frac{ab}4$.
\end{ex}

\section{Real embeddability is typical for infinite matrices} \label{sec:typical}

In this section we show that real embeddability is typical for real contractions on $\ell^2$ with respect to the strong operator topology.

We first recall some topological terminology, see, e.g., Kechris \cite[Chapter 8]{kechris}. Let $E$ be a topological space. A set $A\subset E$ is called \emph{nowhere dense} if for every  non-empty open set $U\subset E$ there exists a non-empty open set $V\subset U$ with $A\cap V=\emptyset$. Furthermore, $A$ is called \emph{meager} (or  \emph{of first category}) if it can be written as a countable union of nowhere dense sets, and \emph{residual} if its complement is meager. For a space satisfying the Baire category theorem (in particular, for a complete metric space), we say that a property is \emph{typical} if the set of points satisfying it forms a residual subset of $E$. 

Since $\ell^2$ is isomorphic via a real isometric isomorphism to 
$$
\ell^2(\N^2)=\left\{(x_{j,n})_{j,n\in\N}\in \C^{\N^2}:\, \sum_{j,n=1}^\infty|x_{j,n}|^2<\infty\right\},
$$ we consider $X:=\ell^2(\N^2)$. We are interested in the space 
$$
 E:=\{T\in\mathcal{L}(X): T \text{ is a real contraction on }X\}
$$
endowed with the strong operator topology 
and the set
$$
M:=\{T\in E: T \text{ is embeddable into a real contractive $C_0$-semigroup on }X\}.
$$
Note that $E$ is a complete metric space with respect to the metric 
$$
d(T,S)=\sum_{n,j=1}^\infty\frac{\|T(e_{j}(n))-S(e_{j}(n))\|}{2^{n+j}},
$$
where $(e_j(n))_{j,n\in \N}$ denotes the canonical basis of $X$. 

We begin with the following important example. 

\begin{ex}\label{ex:shift}
Consider the \emph{(infinite-dimensional backward unilateral) shift} $L:\ell^2(\N^2)\to \ell^2(\N^2)$ defined by
$$
L(e_1(n))=0,\ L(e_{j+1}(n)):=e_{j}(n),\quad j,n\in\N.
$$
We show that $L\in M$. It is clear that $L$ is a real contraction. We adapt the construction of the semigroup from \cite[Prop.~4.3]{eis}. First observe that $\ell^2(\N^2)$ is  unitarily isomorphic to $\ell^2(\N,\ell^2)$ via the real isomorphism $(x_{j,n})_{j,n\in\N}\mapsto (x_{j,\cdot})_{j\in\N}$. (Note that the inverse of a real operator is automatically real.) Furthermore, $\ell^2(\N,\ell^2)$ is uni\-tarily isomorphic to $\ell^2(\N,L^2[0,1])$ via the real unitary correspondence mapping the canonical basis of $\ell^2$ onto the classical real trigonometric orthonormal basis of $L^2[0,1]$. Finally, $\ell^2(\N,L^2[0,1])$ is  unitarily isomorphic to $L^2[0,\infty)$  by virtue of  the real isomorphism
$$
(f_1,f_2,\ldots)\mapsto (s\mapsto f_{n-1}(s-n+1),\ s\in [n-1,n)).
$$
The operator on $L^2[0,\infty)$ corresponding to $L$ is exactly the left shift given by
$$
(Tf)(s):=f(s+1), \quad s\geq 0.
$$
This operator is clearly embeddable into the real (even positive) shift semigroup given by $(T(t)f)(s):=f(s+t)$, $s,t\geq 0$. Analogously, the shift on $\ell^2(\Z^2)$ is real-isomorphic to the left shift on $L^2(\R)$ and hence is embeddable into a real $C_0$-group.

Note that $L$ is not positively embeddable because the corresponding operator on $\ell^2$ (obtained by a natural positive isomorphism between $\ell^2(\N^2)$ and $\ell^2$) is not positively embeddable by Theorem \ref{thm:zeros}. An analogous assertion holds for the shift on $\ell^2(\Z^2)$.
\end{ex}

The following is the main result of this section.

\begin{thm}[Real-embeddability is typical]\label{thm:shift}
The conjugacy class  of the infinite-dimensional shift
$$
O(L):=\{S^{-1}LS:\, S\text{ real unitary}\}
$$ 
is a residual
subset of $E$ with $O(L)\subset M$. In particular, $M$ is residual in $E$.
\end{thm}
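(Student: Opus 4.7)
The plan splits into two independent claims. First, the inclusion $O(L) \subset M$ holds by unitary invariance: Example~\ref{ex:shift} provides a real contractive $C_0$-semigroup $(L(t))_{t\geq 0}$ with $L(1)=L$, and for any real unitary $S$ on $X$ the conjugated family $(S^{-1}L(t)S)_{t\geq 0}$ is again real, contractive and strongly continuous, embedding $S^{-1}LS$. Hence residuality of $O(L)$ in $E$ automatically forces residuality of $M$, and the whole task reduces to the residuality of $O(L)$.

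The strategy I would use is to show $O(L)$ is both dense and $G_\delta$ in $(E,d)$, via the following characterisation: a real contraction $T\in E$ lies in $O(L)$ if and only if $T$ is a co-isometry (i.e.\ $TT^*=I$), $T^n\to 0$ strongly, and $\dim\ker T=\infty$. Necessity is clear from the explicit form of $L$. Sufficiency follows from a real version of the Wold decomposition: the condition $T^n\to 0$ rules out any unitary summand of the co-isometry $T$, so $T$ is an orthogonal direct sum of copies of the backward shift of multiplicity one, the number of copies being $\dim\ker T=\infty$. This is precisely the model realised by $L$, hence $T\in O(L)$.

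For density I would show that given $T\in E$, any basic SOT-neighbourhood specified by finitely many basis vectors and $\varepsilon>0$ contains an element of $O(L)$: since $L$ has infinite-dimensional kernel and defect, one constructs the conjugating real unitary $S$ so as to implement, on a suitably chosen finite-dimensional real subspace containing the test data, a real-linear Halmos-type dilation reproducing the matrix of $T$ on the test vectors up to $\varepsilon$. For the $G_\delta$ property, $\{T:T^nx\to 0\ \forall x\}$ is a standard $G_\delta$ since $T\mapsto T^nx$ is SOT-continuous for each fixed $x$ and $n$; the co-isometry condition $TT^*=I$ is SOT-closed and hence $G_\delta$ in the metric space $(E,d)$; and $\dim\ker T=\infty$ is expressible through the existence of arbitrarily large almost-null orthonormal families in $\ker T$, again $G_\delta$.

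The main obstacle I anticipate is aligning the condition $TT^*=I$ together with the infinite multiplicity $\dim\ker T=\infty$ with the density step: the map $T\mapsto T^*$ is not SOT-continuous, so although these conditions are SOT-closed/$G_\delta$ in principle, approximating a given real contraction by co-isometries with infinite-dimensional kernel inside $E$ requires exploiting carefully the abundance of room provided by the infinite multiplicity of $L$, and is the technical heart of the argument, running in parallel with the typicality results for contractions on $\ell^2$ in the literature. Once the approximation and the $G_\delta$ bookkeeping are in place, Baire category delivers that $O(L)$ is dense $G_\delta$, hence residual, in $E$.
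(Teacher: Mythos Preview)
Your treatment of the inclusion $O(L)\subset M$ via conjugation of the semigroup from Example~\ref{ex:shift} matches the paper exactly. For the residuality of $O(L)$ in $E$, however, the paper takes a much shorter route: it invokes the real version of \cite[Thm.~5.2]{EM}, which already asserts that the conjugacy class $O_\R(L)$ of the infinite-dimensional backward shift is residual in the space $C(X_\R)$ of contractions on the \emph{real} Hilbert space $X_\R=\ell^2_\R(\N^2)$, and then transports this statement to $E$ via the restriction isomorphism $\pi:E\to C(X_\R)$, $\pi(T)=T|_{X_\R}$ (using that $\|T\|=\|T|_{X_\R}\|$ for real $T$). Your proposal, by contrast, sets out to reprove that very typicality result from scratch via the Wold characterisation of $O(L)$ and a dense-$G_\delta$ argument; this is essentially what \cite{EM} does, so you are reconstructing the cited input rather than using it.

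That direct programme is viable, but one step is wrong as written: the set of co-isometries $\{T\in E:TT^*=I\}$ is \emph{not} SOT-closed. Indeed $L^n$ is a co-isometry for every $n$, yet $L^n\to 0$ strongly and $0$ is certainly not a co-isometry. What is true, and sufficient for your purposes, is that this set is $G_\delta$: since $T\mapsto T^*x$ is SOT-to-weak continuous, the map $T\mapsto\|T^*x\|$ is SOT-lower semicontinuous, so for a countable dense sequence $(x_k)$ one has
\[
\{T\in E:TT^*=I\}=\bigcap_{k,n}\bigl\{T\in E:\|T^*x_k\|>\|x_k\|-\tfrac1n\bigr\},
\]
a countable intersection of SOT-open sets. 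Similarly, your ``arbitrarily large almost-null orthonormal families'' condition does not characterise $\dim\ker T=\infty$ among all contractions (an injective compact diagonal operator satisfies it), but it does so \emph{within the co-isometries}, because a co-isometry is isometric on $(\ker T)^\perp$; intersecting with the co-isometry set therefore yields the desired $G_\delta$ description of $O(L)$. With these repairs your outline goes through; the paper simply avoids this work by appealing to \cite{EM} and the complexification isomorphism.
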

\begin{proof}
The inclusion $O(L)\subset M$ follows from Example \ref{ex:shift}: If $L$ is embeddable into a real contractive $C_0$-semigroup $L(\cdot)$, then $S^{-1}L S$ is embeddable into the $C_0$-semigroup $S^{-1}L(\cdot) S$, which is again real and contractive.

To prove that $O(L)$ is a 
residual subset of $E$ we use the real version of \cite[Theorem 5.2]{EM} stating that  for the real space $X_{\R}:=\ell^2_\R(\N^2)$, the conjugacy class of the infinite-dimensional backward shift operator $L_\R$
$$
O_\R(L):=\{S^{-1}L_\R S:\ S \text{ unitary on }X_\R\}
$$
is a residual subset of the space $C(X_\R)$ of contractions on $X_\R$. Note that the proof of \cite[Theorem 5.2]{EM} is valid for both the real and the complex space.

We now use that $X$ is the complexification of $X_\R$. Recall that
for a real ope\-rator $T$ on $X$ with 
restriction $T_\R$ on $X_\R$, $\|T\|=\|T_\R\|$ holds, see, e.g.,  \cite[Prop.~4]{MST}.

Consider now the isometric isomorphism $\pi:E\to C(X_\R)$ given by 
$$
\pi(T):=T_\R.
$$
We see that 
$\pi(O(L))=O_\R(L)$ by the definition of $O(L)$. Since $O_\R(L)$ is residual in $C(X_\R)$, $O(L)$ is residual in $E$.
\end{proof}

As the following shows, the situation is very different for finite-dimensional spaces. 

\begin{prop}[No residuality in finite dimensions]
Let $d\geq 1$ and consider the space 
 $\R^{d\times d}$ of real $d\times d$ matrices
 endowed with the norm ($=$ strong operator) topology. Then both the set of real-embeddable matrices and its complement contain a non-empty open subset. 
In particular, neither real-embeddability nor non-real-embeddability is typical. 
\end{prop}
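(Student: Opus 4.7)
The plan is to exhibit two non-empty open subsets of $\R^{d\times d}$, one inside the real-embeddable matrices and one inside its complement, and in both cases the decisive tool is the characterisation in Theorem~\ref{thm:real}.

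For an open set of real-embeddable matrices I would work in a neighbourhood of the identity $I$. Since the eigenvalues of a matrix depend continuously on its entries (via continuity of the roots of the characteristic polynomial), there is $\varepsilon>0$ such that every $T\in\R^{d\times d}$ with $\|T-I\|<\varepsilon$ satisfies $\sigma(T)\subset\C\setminus(-\infty,0]$. For such $T$, condition (iii) in Theorem~\ref{thm:real} holds vacuously and $T$ is invertible, so $T$ is real-embeddable. (One could equivalently invoke Theorem~\ref{thm:real-ln}(a) to embed $T$ via a logarithm.)

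For an open set of non-real-embeddable matrices, I would distinguish $d=1$ and $d\geq 2$. For $d=1$, every $T\in\R$ with $|T+1|<1$ is negative, so $\det T<0$ and $T$ has no real square root, hence is not real-embeddable. For $d\geq 2$, I would take
\[
T_0:=\mathrm{diag}(-1,2,\ldots,2)\in\R^{d\times d}.
\]
The eigenvalue $-1$ of $T_0$ is simple, producing a single $1\times 1$ Jordan block at a negative eigenvalue, which violates condition (iii) of Theorem~\ref{thm:real}; thus $T_0$ is not real-embeddable. For a sufficiently small perturbation $T$ of $T_0$, $\det T\neq 0$ and the spectrum of $T$ stays close to $\{-1,2\}$ counted with multiplicity. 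Because $-1$ is a simple eigenvalue of $T_0$, there is exactly one eigenvalue of $T$ near $-1$, and since the non-real eigenvalues of a real matrix come in complex conjugate pairs, this single eigenvalue must be real (and still negative). The remaining $d-1$ eigenvalues cluster near $2$ and hence avoid $(-\infty,0]$. Consequently $T$ also has exactly one $1\times 1$ Jordan block at a negative eigenvalue, and Theorem~\ref{thm:real} shows that $T$ is not real-embeddable.

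The only step requiring care is the spectral stability in the second construction: the simple negative eigenvalue of $T_0$ must perturb to a single \emph{real} eigenvalue, and no eigenvalue near $2$ may cross onto $(-\infty,0]$. Both points are elementary consequences of continuity of the spectrum and of the conjugate-pair structure of the spectrum of a real matrix, so I do not anticipate a serious obstacle.
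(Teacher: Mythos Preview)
Your proof is correct and follows essentially the same strategy as the paper: both exhibit the two open sets via spectral conditions coming from Theorem~\ref{thm:real}, using continuity of eigenvalues together with the conjugate-pair structure of the spectrum of a real matrix to guarantee that a simple negative eigenvalue persists under perturbation. The only difference is presentational---you work with explicit neighbourhoods of $I$ and of $\mathrm{diag}(-1,2,\ldots,2)$, while the paper defines the sets $M_1=\{T:\Re\mu>0\ \forall\mu\in\sigma(T)\}$ and $M_2=\{T:\exists\,\mu<0\text{ with }h(\mu,T)=1\}$ abstractly and proves them open by a sequential argument.
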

\begin{proof}
For a matrix $T$ and an eigenvalue $\lambda$ of $T$, denote by $h(\lambda, T)$ the dimension of the corresponding generalised eigenspace.

We begin with the following simple observation. Let $(T_n)$ be a sequence in $\R^{d\times d}$
converging to $T\in 
\R^{d\times d}$. By the pigeonhole  principle, we can find a subsequence which we call $(T_n)$ again such
that each $T_n$ has the same number of different eigenvalues, say $l$. For $n\in\N$, denote the eigenvalues of $T_n$ by $\lambda_{1,n},\ldots,\lambda_{l,n}$ and by $k_j$ the dimension of the generalised eigenspace of $\lambda_{j,n}$, $j\in\{1,\ldots,l\}$ which we 
 can assume to be independent of $n$ by passing to a subsequence if necessary. Let  $p_n(z)=(z-\lambda_{1,n})^{k_1}\cdots(z-\lambda_{l,n})^{k_l}$ be the characteristic polynomial of $T_n$. We also can assume by the pigeonhole  principle that $(\lambda_{j,n})$ converges to some $\mu_j$ as $n\to\infty$ for every $j\in\{1,\ldots,l\}$. By assumption, $p_n$ converges pointwise to the characteristic polynomial $p$ of $T$ which is then of the form $p(z)=(z-\mu_{1})^{k_1}\cdots(z-\mu_{l})^{k_l}$, and $\mu_1,\ldots,\mu_l$ are (not necessarily distinct) eigenvalues of $T$. Thus the dimension of the generalised eigenspace of $\mu_j$ satisfies
\begin{equation}\label{eq:hauptraum}
h(\mu_j, T)=\sum_{i:\mu_i=\mu_j}k_i=\sum_{i:\mu_i=\mu_j}h(\mu_i,T_n).
\end{equation}

We now return to the real-embeddability problem. 
Consider the set
$$
M_1:=\{T\in \R^{d\times d}:\ \Re \mu>0\ \ \forall \mu\in\sigma(T)\}.
$$
Each $T\in M_1$ is real-embeddable by Theorem \ref{thm:real}. Moreover,
by the observation above, $M_1$ is open. Indeed, if not, then there exists $T\in M_1$ and a sequence $(T_n)$ in $\R^{d\times d}$ converging to $T$ with $T_n\notin M_1$ for every $n\in\N$. Then for every $n$ there exists an eigenvalue $\lambda_n$ of $T_n$ with $\Re \lambda_n\leq 0$. Since every limit point of $(\lambda_n)$ has to be an eigenvalue of $T$, this contradicts the assumption.

Consider now the set 
\begin{eqnarray*}
M_2:=\{T\in \R^{d\times d}:\ T\text{ has an eigenvalue $\mu$ in }(-\infty,0)\text{ with }h(\mu, T)=1\},
\end{eqnarray*}
By  Theorem \ref{thm:real}, each $T\in M_2$ is not real-embeddable. It remains to show that $M_2$ is open. Assume that this is false, i.e., there exists $T\in M_2$ and a sequence $(T_n)$ from the complement on $M_2$ which converges to $T$. By the observation above, there is a sequence $(\mu_n)$ in $\C$ with $\mu_n\in \sigma(T_n)$ which converges to $\mu\in \sigma(T)\cap (-\infty,0)$ with $h(\mu, T)=1$. By (\ref{eq:hauptraum}) and the fact that non-real eigenvalues of a real matrix come in pairs of the form $\lambda,\ol{\lambda}$, we conclude that each $\mu_n$ satisfies $\mu_n\in(-\infty,0)$ and $h(\mu_n, T_n)=1$. This contradicts the assumption.

The last assertion follows from the Baire category theorem. 
\end{proof}

Topological properties in the context of finite Markov embeddings were  investigated in, e.g., \cite[Prop.~3]{kingman}, \cite{davies},  \cite{BS}.

\vspace{0.1cm}

\textbf{Acknowledgement.} Our paper was motivated by the recent work of M.~Baake and J.~Sumner \cite{BS} on the Markov embedding problem. We are very grateful to Michael Baake for the inspiration and helpful comments,
 to Rainer Nagel for interesting discussions, and to Jochen Gl\"uck for valuable remarks and references. We sincerely thank the referees for their comments and suggestions which improved the presentation of the paper.

\end{document}